\documentclass[12pt]{article}
\usepackage[a4paper,margin=2.5cm]{geometry}

\usepackage{amsmath}
\usepackage{amsthm}
\usepackage{amssymb}

\usepackage{tikz}
\usetikzlibrary{arrows.meta, decorations.markings}

\tikzset{
  mid arrow/.style={
    postaction={decorate,
      decoration={markings,
        mark=at position #1 with {\arrow[scale=1.3]{Stealth}}
      }
    }
  },
  mid arrow/.default=0.55
}

\usepackage[labelsep=newline]{caption}
\usepackage{subcaption}

\usepackage[hidelinks]{hyperref}
\hypersetup{
  pdftitle={A New Invariant for Doubly Periodic Solutions of the Sinh-Gordon Equation},
  pdfauthor={Nicolas Hasse, Ross Ogilvie, Martin U. Schmidt},
  pdfsubject={Mathematics Subject Classification (2020): 14H15, 14H70, 37K20, 53A10, 30F30, 20H05},
  pdfkeywords={spectral curves, Whitham deformations, isoperiodic sets, constant mean curvature tori, sinh-Gordon equation, Willmore energy},
}

\newcommand{\labelthis}[1]{\stepcounter{equation}\tag{\theequation}\label{#1}}

\numberwithin{equation}{section}

\newtheorem{theorem}{Theorem}[section]
\newtheorem{lemma}[theorem]{Lemma}

\theoremstyle{definition}
\newtheorem{definition}[theorem]{Definition}

\newtheorem{example}[theorem]{Example}

\DeclareMathOperator{\Real}{Re}
\DeclareMathOperator{\Imag}{Im}
\DeclareMathOperator*{\Res}{Res}

\DeclareMathOperator{\id}{id}

\DeclareMathOperator{\Hom}{Hom}

\DeclareMathOperator{\diag}{diag}

\DeclareMathOperator{\Mat}{Mat}
\DeclareMathOperator{\GL}{GL}
\DeclareMathOperator{\SL}{SL}
\DeclareMathOperator{\PSL}{PSL}

\DeclareMathOperator{\Sp}{Sp}
\DeclareMathOperator{\Sym}{Sym}
\DeclareMathOperator{\Perm}{Perm}

\DeclareMathOperator{\Gr}{Gr}
\newcommand{\CP}{\bbC\bbP}
\newcommand{\RP}{\bbR\bbP}

\DeclareMathOperator{\Per}{Per}
\DeclareMathOperator{\APer}{APer}
\DeclareMathOperator{\BPer}{BPer}
\DeclareMathOperator{\CA}{CA}
\newcommand{\willmore}{\calW}
\newcommand{\realPoly}{\calP_\bbR}

\newcommand{\ci}{i}
\newcommand{\dd}{\mathrm{d}}

\newcommand{\bbC}{\mathbb{C}}

\newcommand{\bbF}{\mathbb{F}}

\newcommand{\bbH}{\mathbb{H}}

\newcommand{\bbN}{\mathbb{N}}

\newcommand{\bbP}{\mathbb{P}}
\newcommand{\bbQ}{\mathbb{Q}}
\newcommand{\bbR}{\mathbb{R}}
\newcommand{\bbS}{\mathbb{S}}

\newcommand{\bbZ}{\mathbb{Z}}

\newcommand{\calB}{\mathcal{B}}

\newcommand{\calF}{\mathcal{F}}

\newcommand{\calH}{\mathcal{H}}

\newcommand{\calP}{\mathcal{P}}

\newcommand{\calS}{\mathcal{S}}

\newcommand{\calW}{\mathcal{W}}

\parskip = \baselineskip

\title{A New Invariant for Doubly Periodic Solutions of the Sinh-Gordon Equation}
\author{Nicolas Hasse, Ross Ogilvie, Martin U.\ Schmidt}
\date{}

\begin{document}
\maketitle
\begin{NoHyper}\renewcommand{\thefootnote}{}\footnotetext{Date: \today.\\Mathematics Subject Classification (2020): Primary 14H15; Secondary 14H70, 37K20, 53A10, 30F30, 20H05.}\end{NoHyper}
\begin{abstract}
Spectral curves of constant mean curvature tori in Euclidean three-space are real hyperelliptic curves carrying two meromorphic differentials with integral periods.
These transcendental period conditions are studied through Whitham deformations, which deform the curve while preserving the periods.
We extend Whitham deformations to all real hyperelliptic curves of this type, by holding fixed the plane spanned by the periods, or this plane together with its complex structure.
Under mild nondegeneracy conditions the resulting isoperiodic sets are manifolds of dimensions three and one, respectively.
Along the one-dimensional isoperiodic sets, a generalization of the Willmore energy is a local parameter away from its critical points, to which the spectral curves of tori belong.
Finally, using homology bases adapted to the branch points, we find discrete invariants of the isoperiodic sets of doubly periodic curves: a triple of integers and a refined conformal type.
\end{abstract}

\section{Introduction}

Wente's discovery of immersed constant mean curvature tori in $\bbR^3$~\cite{Wente1986} led to their classification in terms of spectral data by Pinkall and Sterling~\cite{Pinkall1989}, Hitchin~\cite{Hitchin1990} and Bobenko~\cite{Bobenko1991}.
CMC surfaces are a large and rich class of surfaces~\cite{Kapouleas1995}, and are some of the principal examples of constrained Willmore surfaces, which makes them candidates for minimizers of the Willmore energy in their conformal class~\cite{Heller2018a}.
The induced metric of CMC surfaces satisfies the sinh-Gordon equation, an integrable system.
The spectral data of a solution to the sinh-Gordon equation consist of a real hyperelliptic curve $\Sigma_a$, a pair of meromorphic differentials, and a line bundle.
The doubly periodic solutions of the sinh-Gordon equation are characterized by the condition that the differentials have periods in $2\pi\ci\bbZ$ (a further condition is needed to ensure that the immersion itself is doubly periodic).

One therefore studies the spectral data of CMC tori within a set of real hyperelliptic curves $\calH^g$.
The shape of this subset, known only for some low-genus cases~\cite{Carberry2020}, is intricate.
Even the existence of CMC tori of each spectral genus is nontrivial~\cite{Ercolani1993,Jaggy1994}, because the conditions are given by abelian integrals.
As the spectral curves of CMC tori in $\bbR^3$ are believed to comprise a totally disconnected subset of $\calH^g$, one studies their closure.
Their closure lies in the set $\calS^g$ where the differentials have a common root on $\bbS^1$~\cite{Carberry2016}, and is known completely for $g=2$~\cite{Carberry2023arxiv}, whereas for CMC tori in $\bbS^3$ it is all of $\calH^g$~\cite{Carberry2016a}.
The main tool for these results is the Whitham deformation, or isoperiodic flow, which deforms the algebraic curve while preserving the periods of the differentials~\cite{Grinevich1995,Kilian2015,Carberry2019}.

In this paper we take a different approach, in that we do not try to stay within the closure of spectral curves of tori, but instead generalize the tools for Whitham deformations to all nonsingular curves of $\calH^g$.
The natural extension of isoperiodic flow beyond the doubly periodic solutions of the sinh-Gordon equation is to preserve the periods of the whole plane of differentials with purely imaginary periods.
This reformulates the isoperiodic condition as holding constant a point in the Grassmannian $\Gr(2,\bbR^g)$.
We also show how to give this plane of periods a complex structure, making it a point in $\CP^{g-1}$, and consider the Whitham deformations that additionally preserve this complex structure.
These two kinds of Whitham deformation yield isoperiodic sets of dimensions $3$ and $1$, respectively (Theorem~\ref{thm:Whitham deformation}).
For doubly periodic solutions there is a formula for the Willmore energy of a fundamental domain that depends only on the spectral data~\cite[Theorem~5.7]{Knopf2025}.
Dividing out the area of the period parallelogram, we obtain the \emph{normalized Willmore energy} $\willmore$, which is defined for every nonsingular curve of $\calH^g$, not only in the doubly periodic case.
We show that $\willmore$ is a local parameter of the $1$-dimensional isoperiodic sets away from the set $\calS^g$, which contains the closure of the spectral curves of tori, and that it has a critical point at each curve of $\calS^g$ (Theorem~\ref{thm:Willmore derivative}).

In the final section, we present a new set of invariants for the isoperiodic sets in the doubly periodic case.
For the two types of Whitham deformations, these are respectively a triple of integers $(n_1,n_2,n_3)$ (Theorem~\ref{thm:CAg orbits of Pi}), and a point $\tau \in \calF_1, \calF_3$, or $\calF_6$, depending on how many $n_i$ coincide, where $\calF_k$ are quotients of the upper half-plane by subgroups of the modular group of index $1,3$, or $6$ (Theorem~\ref{thm:CAg orbits of τ}).
In future work we will show that these are complete invariants for isoperiodic sets in $\calH^2$, the genus $2$ case.
The discovery of these invariants came from trying to understand the observation in~\cite[Remark~4.2]{Knopf2025} that there were three examples per conformal class of lattice.
The key notion is that of \emph{adapted canonical bases of homology} (Definition~\ref{def:adapted basis}): unlike arbitrary canonical bases, they are tied to the pairs of branch points, so the period lattice expressed in such a basis retains information that period-preserving deformations cannot change.


During the preparation of this work, Claude (Anthropic) was used in the creation of figures, and in the checking and editing of this article.
In terms of ideas, it found the classification of orbits by triples in Section~\ref{sec:adapted bases}; all other ideas originate from the authors.
The statements of this paper have been formalized in the Lean~4 proof assistant, conditional on an axiomatic description of path integrals.

\subsection{The Definition of \texorpdfstring{$\calH^g$}{Hg}}
\label{sec:def Hg}
The spectral curves corresponding to real finite-type solutions of the sinh-Gordon equation with spectral genus $g$ are hyperelliptic complex curves $\Sigma = \Sigma_a$, described by the equation $\nu^2 = \lambda\,a(\lambda)$ for a polynomial $a$ of degree $2g$ without roots at $\lambda=0$, together with a real involution that has no fixed points and that exchanges the points over $\lambda = 0,\infty$.
By compactification, we regard $\Sigma$ as a compact, hyperelliptic surface over the Riemann sphere $\lambda \in \CP^1$, so that the hyperelliptic involution is
\begin{equation}
\label{eq:sigma}
\sigma: \Sigma \to \Sigma,\; (\lambda,\nu) \mapsto (\lambda,-\nu),
\end{equation}
and the real involution covers $\lambda \mapsto \bar{\lambda}^{-1}$.
We denote the part of $\Sigma$ that lies over $\bbC^\times$ by $\Sigma^\circ = \{(\lambda,\nu) \in \bbC^\times \times \bbC \mid \nu^2 = \lambda\,a(\lambda)\}$.
The composition of the real involution without fixed points and the hyperelliptic involution must be a real involution $\rho$ that fixes all points with $|\lambda|=1$. 
Consequently $a\in \realPoly^{2g}$, where for $m\in \bbN$ we define $\realPoly^m$ as the space of polynomials $p$ of degree at most $m$ that satisfy the reality condition
\[
\lambda^{m}\, \overline{p(\bar{\lambda}^{-1})} = p(\lambda).
\]
The involution $\rho$ therefore acts as
\begin{equation}
\label{eq:rho}
\rho : \Sigma\to\Sigma,\; (\lambda,\nu) \mapsto (\bar{\lambda}^{-1}, \bar{\lambda}^{-(g+1)} \,\bar{\nu}).
\end{equation}
Were $a$ to have a root over the unit circle, then this would be a fixed point not only of $\rho$, but also of the real involution without fixed points $\rho\circ\sigma$.
Therefore $a(\lambda)$ cannot be zero for $\lambda \in \bbS^1$ for tori.
This notion of spectral curve in~\cite{Hitchin1990} is too rigid when one considers deformation theory, as the limit of roots may indeed come to $\bbS^1$.
Since for any $a \in \realPoly^{2g}$ the function $\lambda^{-g}a(\lambda)$ takes real values on the unit circle, we instead require that it be nonnegative.

There is an action of $\bbR^\times$ on $\calP^{2g}_\bbR$ by scaling that produces isomorphic curves $\Sigma_a$, with which we may normalize so that $\lambda^{-g}a(\lambda)$ is positive for all unimodular $\lambda$ and that $a(0)$ is unimodular.
Further, there is a group action by $e^{\ci\phi}\in\bbS^1$ on $a\in\calP^{2g}_\bbR$, given by $a(\lambda) \mapsto e^{g \ci \phi}a(e^{-\ci \phi}\lambda)$, which is a rotation of $\nu$ by $e^{(g+1)\ci\phi/2}$, so the orbit of this action again contains polynomials $a$ with isomorphic $\Sigma_a$.
We reduce the group $\bbS^1$ acting on the polynomials $a$ to the discrete subgroup $\{e^{2\ci\pi l/g}\mid l\in\bbZ/g\bbZ\}$ by imposing the condition $a(0)=1$, which by the reality condition is equivalent to $a$ being monic.
This condition ensures that any hyperelliptic curve corresponds to a finite set of polynomials $a$.
Hence if we choose a polynomial $a$ to represent $\Sigma_a$ and then deform the hyperelliptic curve, there is a unique deformation of $a$, which is simpler than dealing with a quotient space of equivalent $a$.

We summarize our conditions on $a$ in the following definition:
\begin{definition}
\label{def:Hg}
For any $g\in\bbN_0$ we define $\calH^g$ as the subset of $a\in\bbC^{2g}[\lambda]$ that obey
\begin{enumerate}
\item[(H1)]
\emph{Reality condition:} $a\in\calP^{2g}_\bbR$.
\item[(H2)]
\emph{Positivity condition:} $\lambda^{-g}a(\lambda)\ge0$ for all unimodular $\lambda$.
\item[(H3)]
\emph{Monic condition:}
$a(0)=1$.
\end{enumerate}
We use $\calH^g_0$ to denote the subset where $a\in\calH^{g}$ has $2g$ distinct roots.
\end{definition}

The set $\calH^g_0$ is the subset of $a\in\calH^{g}$ with smooth curves $\Sigma_a$.
It is an open subset of the affine subspace of $a\in\calP^{2g}_\bbR$ with $a(0)=1$, which is $(2g-1)$-dimensional.
Furthermore, $\calH^g$ is the closure of $\calH^g_0$.
Observe that since $a \in \calH^g_0$ has no roots on the unit circle, its roots must occur in pairs $\alpha, \bar{\alpha}^{-1}$.
We will use $\alpha_i$ to refer to the $g$ roots of $a$ inside the unit circle.
Since the roots of $a$ are branch points, they correspond to a single point of $\Sigma_a$, which we will occasionally also refer to as $\alpha_i$.
For the quick construction of examples, it is convenient to have the form of $a$ in terms of its roots:
\[
a(\lambda)
= R \prod_{j=1}^g (\lambda - \alpha_j)(1-\bar{\alpha}_j\lambda),
\quad
R^{-1} = \prod_{j=1}^g |\alpha_j|,
\quad
\sum_{j=1}^g \arg \alpha_j = \pi g \mod 2\pi.
\]

\subsection{Meromorphic Differentials \texorpdfstring{$\calB_a$}{B\_a} and a Period Map}
\label{sec:differentials}

Not all curves of $\calH^g_0$ are the spectral curves of doubly periodic solutions to the sinh-Gordon equation.
Such solutions correspond geometrically to constant mean curvature immersions of plane into $\bbR^3$ with a doubly periodic metric.
A curve $\Sigma_a$ is such a spectral curve if and only if there exist two meromorphic differentials $\Theta^1,\Theta^2$ such that $\Theta^1$ and $\Theta^2$ are the logarithmic derivatives $\dd\log\mu$ of the eigenvalues $\mu$ of the monodromy along two generators of the period lattice of the metric.
Equivalently,
\begin{enumerate}
\item[(P1)]
Their only poles are residue-free double poles at $\lambda = 0,\infty$, and they are $\bbR$-linearly independent.
\item[(P2)]
They have the symmetries
\begin{equation}
\label{eqn:Theta symmetries}
\sigma^* \Theta^k = -\Theta^k
\quad\text{and}\quad
\rho^* \Theta^k = -\overline{\Theta^k}\,.
\end{equation}
\item[(P3)]
Their periods lie in $2\pi\ci\bbZ$.
\end{enumerate}
Among the CMC immersions of a plane with double periodic metric are CMC immersions of a torus.
There are two further conditions on the differentials $\Theta^1, \Theta^2$ that distinguish this case: 
There is a point $\lambda_0$ (called the Sym point) such that
\begin{enumerate}
\item[(P4)]
They have a common root at $\lambda_0 \in \bbS^1$.
\item[(P5)]
Their integrals along a path between the two points of $\Sigma_a$ over $\lambda_0$ also lie in $2\pi\ci\bbZ$.
\end{enumerate}
In terms of eigenvalues of the monodromy, these conditions say, respectively, that $\dd\mu = 0$ and $\mu = \pm 1$ at $\lambda_0$.
The differentials that obey (P1)--(P5) are in general a subset of those that obey (P1)--(P3).
In such a case, several fundamental domains of the doubly periodic metric are required to cover the torus once.

Some of these conditions can be satisfied on every curve $\Sigma_a$ with $a \in \calH^g_0$.
For any $b \in \realPoly^{g+1}$ we define the differential form on $\Sigma_a$
\begin{equation}
\label{def:theta}
\Theta_b = \frac{b(\lambda)}{\lambda}\,\frac{\dd\lambda}{\nu} \; .
\end{equation}
On a hyperelliptic curve the differential $\nu^{-1} \dd\lambda$ is holomorphic ($g \geq 1$).
The factor $\lambda^{-1}$ and the degree of $b$ yield residue-free double poles at $\lambda = 0,\infty$.
Notice that both symmetry properties~\eqref{eqn:Theta symmetries} are satisfied.

\begin{definition}
\label{def:calBa}
Let $\calB_a$ be the $\bbR$-linear space of polynomials $b\in \realPoly^{g+1}$ whose corresponding differential form $\Theta_b$ has purely imaginary periods. 
\end{definition}

Certainly if the periods of $\Theta_b$ lie in $2\pi\ci\bbZ$ then $b$ belongs to $\calB_a$.
To investigate $\calB_a$, we introduce the notion of \emph{$A$-cycles}: homology cycles for which $\rho_\ast A = - A$.
They form a subgroup of the homology group $H_1(\Sigma_a,\bbZ)$, which is necessarily also free and abelian.
By~\eqref{eqn:Theta symmetries} any corresponding period of $\Theta_b$ is real.
To count the rank of the $A$-cycles, we deploy the following argument.
The real involution $\rho_\ast$ splits $H_1(\Sigma_a,\bbQ)$ into two eigenspaces with eigenvalues $\pm 1$.
Since $\rho$ is orientation reversing, the eigenspaces are isotropic with respect to the intersection form: $X \cdot Y = - (\rho_\ast X) \cdot (\rho_\ast Y) = - X \cdot Y$.
As the intersection form is nondegenerate, it must be a perfect pairing of the two eigenspaces.
This shows that each has dimension $g$ over $\bbQ$.
On the other hand, take a basis $\{X_1,\dots,X_{2g}\}$ for $H_1(\Sigma_a,\bbZ)$.
Observe that $X_j - \rho_\ast X_j$ is an $A$-cycle and that $X \mapsto \frac{1}{2}(X - \rho_\ast X)$ is a projection of $H_1(\Sigma_a,\bbQ)$ onto the $-1$ eigenspace of $\rho_\ast$.
Therefore the cycles $\{X_1-\rho_\ast X_1,\dots,X_{2g}-\rho_\ast X_{2g}\}$ span the $A$-cycles over $\bbQ$.
This shows that the rank of the $A$-cycles is also $g$.

Choose a basis $A_1,\dots,A_g$ of the $A$-cycles and complete it to a canonical basis of homology $A_1,\dots,A_g,B_1,\dots,B_g$.
Unless otherwise specified, we only consider bases of homology of this form.
We see $\rho_\ast B - B$ is an $A$-cycle.
As an aside, one might hope that it vanishes, but such canonical bases in general do not exist~\cite[Prop.~4.4]{Gross1981}.
The following equation implies that the real parts of the $B$-periods are given by half-integer linear combinations of the $A$-periods:
\[
\Real\left( \int_B \Theta_b \right) 
= \frac{1}{2} \left(\int_B \Theta_b - \int_{B} \rho^\ast\Theta_b\right) 
= \frac{1}{2} \left(\int_B \Theta_b - \int_{\rho_\ast B} \Theta_b\right) 
= \frac{1}{2} \int_{B - \rho_\ast B} \Theta_b.
\]
Hence if the $A$-periods of $\Theta_b$ are zero, then it has purely imaginary $B$-periods.
Indeed, $b\in\calB_a$ if and only if its $A$-periods are zero.

\begin{lemma}
\label{lem:differential omega}
The map $\calB_a \to \bbC$ given by $b \mapsto b(0)$ is an isomorphism of $\bbR$-vector spaces.  
This isomorphism endows $\calB_a$ with a complex structure $J$.
\end{lemma}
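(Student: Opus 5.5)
Dimension count plus injectivity. First I will check that $b\mapsto b(0)$ is an $\bbR$-linear map $\calB_a\to\bbC$, which is obvious. Then I will show that $\calB_a$ has real dimension $2$ and that the map is injective; the isomorphism follows. The complex structure $J$ is then simply the pullback of multiplication by $\ci$ on $\bbC$.

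\textbf{Dimension.} Let $b\in\realPoly^{g+1}$. The reality condition $\lambda^{g+1}\overline{b(\bar\lambda^{-1})}=b(\lambda)$ pairs coefficient $b_k$ with $\overline{b_{g+1-k}}$. Since $g+1$ is odd when $g$ is even, and in general no coefficient is self-paired except possibly the middle one, $\realPoly^{g+1}$ has real dimension $g+2$. The paper has shown that $b\in\calB_a$ if and only if the $g$ real numbers $\int_{A_j}\Theta_b$ all vanish. Hence $\calB_a$ is the kernel of a linear map $\realPoly^{g+1}\to\bbR^g$, and $\dim_\bbR\calB_a\ge 2$. Injectivity together with this bound forces $\dim_\bbR\calB_a=2$ and makes the map bijective.

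\textbf{Injectivity.} This is the main step. Suppose $b\in\calB_a$ with $b(0)=0$. By the reality condition, the leading coefficient $b_{g+1}=\overline{b_0}$ also vanishes. So $b(\lambda)=\lambda c(\lambda)$ with $\deg c\le g-1$, and $\Theta_b=c(\lambda)\,\dd\lambda/\nu$ is a holomorphic differential. Its periods are all purely imaginary, since its $A$-periods vanish and, by the displayed formula, its $B$-periods then have zero real part.

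I will then apply the Riemann bilinear relation: for a holomorphic $\omega\ne0$,
\[
\ci\int_\Sigma\omega\wedge\bar\omega>0 \quad\text{and}\quad \int_\Sigma\omega\wedge\bar\omega=\sum_j\Big(\int_{A_j}\omega\int_{B_j}\bar\omega-\int_{B_j}\omega\int_{A_j}\bar\omega\Big).
\]
With $A$-periods zero the right side vanishes, so $\omega=0$. Hence $c=0$ and $b=0$.

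Alternatively one can use the standard fact that a holomorphic differential with vanishing $A$-periods is zero. The only care needed is that $A_1,\dots,A_g$ is part of a canonical basis, which is how the paper chose it.
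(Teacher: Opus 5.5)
Your proof is correct. The injectivity half is the paper's own argument: $b(0)=0$ makes $\Theta_b$ holomorphic, and the bilinear relations force it to vanish. You also usefully spell out that the reality condition kills the top coefficient $b_{g+1}=\overline{b_0}$, which the paper leaves implicit.

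You depart from the paper on surjectivity. The paper builds the preimage of $\omega$ explicitly. It starts from $\omega+\bar\omega\lambda^{g+1}$ and subtracts a holomorphic differential $\Theta_{\tilde b}$ with the same (real) $A$-periods. It then symmetrizes $\tilde b\mapsto\frac12(\tilde b+\rho^\ast\tilde b)$ so that the result stays in $\realPoly^{g+1}$.

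You instead count dimensions. The map $\APer_a$ is $\bbR$-linear from the $(g+2)$-dimensional space $\realPoly^{g+1}$ to $\bbR^g$, so $\dim_\bbR\calB_a\ge 2$, and injectivity into $\bbC$ then forces bijectivity.

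Your route is shorter. It avoids both the existence of a holomorphic differential with prescribed $A$-periods (itself normally proved by the same injectivity-plus-dimension argument, on the complex side) and the symmetrization step. The paper's route buys an explicit recipe for $b^\omega$. Both depend on the same fact: for $b\in\realPoly^{g+1}$ the $A$-periods of $\Theta_b$ are real. Without it, your lower bound would drop to $g+2-2g$ and say nothing.

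One minor point: your justification of $\dim_\bbR\realPoly^{g+1}=g+2$ is garbled. The condition reads $b_{g+1-k}=\overline{b_k}$. For $g$ even this leaves $(g+2)/2$ free complex coefficients. For $g$ odd it leaves $(g+1)/2$ free complex coefficients plus one real middle coefficient. Either way the real dimension is $g+2$.
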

\begin{proof}
If $b(0)=0$ then $\Theta_b$ is holomorphic and Riemann's bilinear relations imply $b=0$, establishing injectivity.
On the other hand, consider $\Theta_b$ for $b = \omega + \bar{\omega}\lambda^{g+1} \in \calP^{g+1}_\bbR$.
By~\eqref{eqn:Theta symmetries}, its $A$-periods are real.
We can find a holomorphic differential $\Theta_{\tilde{b}}$ with the same $A$-periods for some $\tilde{b}$ with $\deg \tilde{b} \leq g+1$ and $\tilde{b}(0) = 0$.
Replacing $\tilde{b}$ with $\frac{1}{2}(\tilde{b} + \rho^\ast \tilde{b})$ ensures $\tilde{b} \in \calP^{g+1}_\bbR$ without changing its $A$-periods, since the $A$-periods are known to be real.
Then $b - \tilde{b} \in \calB_a$ is the sought-after polynomial with constant coefficient $\omega$.
Hence the map $b\mapsto b(0)$ is an isomorphism.
\end{proof}

This lemma shows that $\calB_a$ is a real two-dimensional subspace of $\calP^{g+1}_\bbR$.
For any $\omega\in\bbC$ let $b^\omega$ denote the unique $b^\omega\in \calB_a$ with $b^\omega(0)=\omega$.
It follows that there exists a unique basis $(b_1,b_2)$ of $\calB_a$ with $b_1(0)=1$ and $b_2(0)=\ci$, that is $b_1=b^1$ and $b_2=b^\ci$.
Throughout, an upper index on $b$ is the parameter $\omega$ and a lower index labels an element of a basis.
The complex structure $J$ is just the complex structure of $\bbC$ transported through the isomorphism to $\calB_a$: $Jb^\omega = b^{\ci \omega}$.

In general, this is as much as one can achieve towards the construction of differentials of CMC tori.
Let us make some remarks on the remaining conditions.
The pair of differentials are $\bbR$-independent if and only if their principal parts are, hence they span $\calB_a$.
If the pair have a common root then this root is common to every element of $\calB_a$ and conversely.
In this way we see that the common root condition is properly understood as a condition on $a$.
The subset of $\calH^g_0$ with a common root of $\calB_a$ on $\bbS^1$ is called $\calS^g$ and explored in~\cite{Carberry2016}.
Condition (P5), on path integrals between the two points over this common root, can be understood as a type of period condition on a singular hyperelliptic curve~\cite[Theorem~3.3]{Carberry2013a}.
A geometric interpretation of these two Sym conditions, that they correspond to translation and rotation respectively, is given in~\cite[Theorem~7.2(i)]{Kilian2007}.

Condition (P3) concerns the periods of the differentials in $\calB_a$, and how these periods vary with $a$ is the subject of this paper.
To compare periods on different curves we must compare their homology, which we do locally by means of a special type of neighborhood in $\calH^g_0$, called a \emph{distinct root neighborhood}.
Let $a \in \calH^g_0$.
For each root $\alpha$ of $a$, take a ball $B(\alpha,\varepsilon)$ with $\varepsilon$ small enough that the balls are pairwise disjoint.
Then the set of polynomials in $\calH^g_0$ such that each ball contains exactly one root is a neighborhood $U$ of $a$ in $\calH^g_0$.
The sets $\Sigma_{\tilde{a}} \setminus \lambda^{-1}[\cup_\alpha B(\alpha,\varepsilon)]$ are naturally biholomorphic for all $\tilde{a} \in U$, via projection to the $\lambda$-plane.
Since any class in homology can be represented by a cycle that avoids these balls, given a basis of homology on $\Sigma_a$, we also have a unique choice of basis on every $\Sigma_{\tilde{a}}$.
In other words, the bases of homology form a discrete fiber bundle over $\calH^g_0$.
All our results are local in $a$, so we do not work with this bundle directly.
Instead, whenever a quantity depends on a choice of basis of homology, such as the period map below, we fix a basis at the point in question and transport it to a neighborhood in this way.

\begin{definition}
\label{def:period map}
We define the \emph{period map} $\Per : \calH^g_0 \times \calP_\bbR^{g+1} \to \bbR^g \times \bbC^{g}$ as the vector of integrals $\Per_a(b) = (\int_{A_1}\Theta_b, \dots, \int_{B_g}\Theta_b)$ over a canonical basis of homology.
It is well-defined locally, as described above.
For convenience we also introduce notation for the two halves of this vector: $\APer_a(b) = (\int_{A_1}\Theta_b, \dots, \int_{A_g}\Theta_b) \in \bbR^g$ and $\BPer_a(b) = -\ci (\int_{B_1}\Theta_b, \dots, \int_{B_g}\Theta_b) \in \bbC^g$.
The plane $\calB_a$ is the kernel of $\APer_a$, and we write $\calB = \{(a,b) \in \calH^g_0 \times \calP^{g+1}_\bbR \mid b \in \calB_a\}$.
The normalization constant $-\ci$ in the definition of $\BPer_a$ is essentially taking the imaginary part, and ensures that $\BPer_a|_{\calB_a}$ takes values in $\bbR^g$.
\end{definition}

\begin{lemma}
\label{lem:differential period}
$\Per_a$ is a smooth function of $a$ and linear in $b$.
The map
\[
\calH^g_0 \times \calP^{g+1}_\bbR \to \calH^g_0 \times \bbC \times \bbR^g,
\qquad
(a,b) \mapsto (a,b(0),\APer_a(b)),
\]
is a local diffeomorphism and $\calB$ is a trivial rank $2$ vector bundle over $\calH^g_0$.
Further, $\BPer_a|_{\calB_a} : \calB_a \to \bbR^g$ has full rank and in particular is injective for $g \geq 2$.
\end{lemma}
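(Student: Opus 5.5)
Smoothness and linearity come first. Linearity in $b$ is immediate from the definition $\Theta_b = b(\lambda)\lambda^{-1}\,\dd\lambda/\nu$. For smoothness in $a$, I will work on a distinct root neighborhood $U$ of a fixed $a$. The transported cycles can be represented by fixed paths in $\lambda$-space that avoid the balls $B(\alpha,\varepsilon)$. On these paths the integrand $b(\lambda)/(\lambda\sqrt{\lambda\tilde a(\lambda)})$ is smooth (indeed real-analytic) in the coefficients of $\tilde a$, because $\tilde a$ has no zeros there and a consistent branch of the square root can be chosen locally. Differentiating under the integral sign then gives smoothness of $\Per$ on $U$, and hence everywhere.

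Next comes the local diffeomorphism. The map $F(a,b)=(a,b(0),\APer_a(b))$ is smooth and preserves the first factor. It therefore suffices to show that for each fixed $a$ the map $b\mapsto (b(0),\APer_a(b))$ from $\calP^{g+1}_\bbR$ to $\bbC\times\bbR^g$ is a linear isomorphism.
\begin{itemize}
\item Dimensions agree: the reality condition makes $\calP^{g+1}_\bbR$ a real vector space of dimension $g+2$, matching $2+g$.
\item Injectivity: if $b(0)=0$ and $\APer_a(b)=0$, then $b\in\calB_a$ with $b(0)=0$, so $b=0$ by Lemma~\ref{lem:differential omega}.
\end{itemize}
The derivative of $F$ is then block triangular with identity and invertible diagonal blocks, so the inverse function theorem applies. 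It follows that $\calB=F^{-1}(\calH^g_0\times\bbC\times\{0\})$ is a smooth rank $2$ subbundle. It is trivialized globally by the sections $a\mapsto b_1=b^1$ and $a\mapsto b_2=b^\ci$. These are well-defined independently of the choice of homology basis, since $\calB_a$ is the kernel of $\APer_a$ for any basis, and they are smooth because they are locally $F^{-1}(a,1,0)$ and $F^{-1}(a,\ci,0)$.

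Finally, the rank statement. The plan is to show that $\BPer_a|_{\calB_a}$ is injective when $g\ge 2$ and nonzero when $g=1$; in both cases this is full rank $\min(2,g)$. Suppose $b\in\calB_a$ with $\BPer_a(b)=0$. The $B$-periods of $\Theta_b$ are purely imaginary, so the vanishing of their imaginary parts means they vanish entirely. The $A$-periods vanish because $b\in\calB_a$. Thus all periods of $\Theta_b$ vanish, and $\Theta_b$ has zero residues, so $f=\int\Theta_b$ is a single-valued meromorphic function on $\Sigma_a$. Its only poles are simple poles at the points over $\lambda=0$ and $\lambda=\infty$. These are branch points of the hyperelliptic cover, so there is exactly one point over each.

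If $b(0)\ne0$, then $f$ has degree at most $2$. A genus $g\ge2$ hyperelliptic curve has a unique $g^1_2$, namely $\lambda$ and its Möbius images. So $f=(c\lambda+d)/(e\lambda+h)$ would be a function of $\lambda$ alone. But $\sigma^*f=-f+\text{const}$, because $\sigma^*\Theta_b=-\Theta_b$, and a function of $\lambda$ is $\sigma$-invariant. This forces $f$ to be constant, contradicting $b(0)\ne 0$. Hence $b(0)=0$, and $b=0$ by Lemma~\ref{lem:differential omega}.

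For $g=1$ the target is $\bbR$, and the same argument shows that a nonzero $b$ would give a degree $2$ function anti-invariant under $\sigma$ up to a constant, which is again impossible. In fact I would handle this uniformly by using anti-invariance directly: write $f=\nu\cdot r(\lambda)+\text{const}$. The pole orders at the branch points over $0$ and $\infty$ then exceed $1$ unless $f$ is constant, since $\nu$ already has a pole of order $2g+1$ at $\infty$. This avoids the $g^1_2$ uniqueness, and I expect this pole-order bookkeeping, rather than the smoothness part, to be the main point to check carefully.
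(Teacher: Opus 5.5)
Your arguments for smoothness, for the local diffeomorphism and the trivialization of $\calB$, and for injectivity when $g\geq 2$ (uniqueness of the $g^1_2$ combined with $\sigma$-anti-invariance) are correct and follow the paper's proof. The gap is the case $g=1$, which is part of the full-rank claim.

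\textbf{Why the $g=1$ argument fails.} For $g=1$ the $g^1_2$ is no longer unique, since an elliptic curve has many degree-two maps, so the ``same argument'' does not transfer. Your conclusion that no nonzero $b\in\calB_a$ has vanishing periods is also false. It would make $\BPer_a|_{\calB_a}$ an injective linear map from the two-dimensional space $\calB_a$ to $\bbR$, which cannot exist.

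Explicitly, take $f=\ci\nu/\lambda$ for $g=1$. It is anti-invariant under $\sigma$. It has simple poles at the branch points over $\lambda=0$ and $\lambda=\infty$: in the coordinates $\lambda=\zeta^2$ and $\lambda=\eta^{-2}$ one has $\nu\sim\zeta$ and $\nu\sim\eta^{-3}$, so $f$ behaves like $\zeta^{-1}$ and $\eta^{-1}$. Moreover $\dd f=\Theta_b$ with $b=\tfrac{\ci}{2}(\lambda a'-a)\in\calP^2_\bbR$ and $b(0)=-\tfrac{\ci}{2}\neq 0$. This is a nonzero element of $\calB_a$ with all periods zero.

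Your ``uniform'' pole-order count misses this because it implicitly treats $r$ as a polynomial. Regularity of $f$ at the branch points over the roots of $a$ only makes $r$ regular away from $\lambda=0,\infty$. Since $\nu$ vanishes at the branch point over $\lambda=0$, $r$ may have a simple pole there. Then the pole order of $\nu r$ at $\infty$ drops to $2g-1$, which equals $1$ when $g=1$.

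\textbf{What is needed.} For $g=1$ you need the upper bound that the kernel of $\BPer_a|_{\calB_a}$ is at most one-dimensional, so that the map to $\bbR$ is nonzero. The paper obtains this by noting that two $\bbR$-independent such functions could be combined into one with a single simple pole, which is impossible in genus one.

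Alternatively, your own bookkeeping can be completed correctly. Write $r=\sum_{k=-1}^{n} r_k\lambda^k$ with $r_n\neq 0$. The simple pole at $\infty$ forces $2g+1+2n\leq 1$.
\begin{itemize}
\item For $g\geq 2$ this gives $r=0$, which is a valid alternative to the $g^1_2$ argument.
\item For $g=1$ it forces $r\in\bbC\lambda^{-1}$. The reality condition $b\in\calP^2_\bbR$ then cuts this down to the real line spanned by $b=\ci(\lambda a'-a)$.
\end{itemize}
In either form the kernel is exactly one-dimensional for $g=1$, which gives rank $1=g$ as claimed.
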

\begin{proof}
The linearity in $b$ is just linearity of the integral, and the linear dependence of $\Theta_b$ on $b$ in its definition~\eqref{def:theta}.
As for the smoothness in $a$, we use a distinct root neighborhood in $\calH^g_0$ as described above.
For a choice of representative curves of the homology basis that lie outside of the balls, the roots of $a$ do not lie on these curves.
Therefore on the cycles the function $\nu^{-1}$ is a nonsingular function that is smooth in $a$.
An application of the dominated convergence theorem gives smoothness of the integrals.

The second claim follows from Lemma~\ref{lem:differential omega}.
If $(\dot{a},\dot{b})$ is in the kernel of the derivative of the map, then $\dot{a}=0$, and so $\dot{b}(0)=0$ and $\APer_a(\dot{b})=0$ by linearity.
Lemma~\ref{lem:differential omega} shows that $b$ is uniquely determined by $b(0)$ and its $A$-periods, hence $\dot{b}=0$.
Since both spaces have dimension $3g+1$, the map is a local diffeomorphism.
The set $\calB$ is the preimage of $\calH^g_0\times\bbC\times\{0\}$, so the local diffeomorphism restricts to the map $\calB \to \calH^g_0\times\bbC$, $(a,b)\mapsto(a,b(0))$, which is linear on fibers.
It is bijective by Lemma~\ref{lem:differential omega}, and hence a trivialization of $\calB$ as a rank $2$ vector bundle.

Next we address the claim about the rank of $\BPer_a|_{\calB_a}$.
We compute the dimension of the kernel in several cases.
Since $\calB_a$ is the kernel of $\APer_a$, the kernel of $\BPer_a|_{\calB_a}$ is the kernel of $\Per_a$.
For any $b$ in the kernel, the differential $\Theta_b$ is exact, and so gives a meromorphic function on the spectral curve with simple poles over $\lambda = 0,\infty$ with a reality condition.
For $g \geq 2$, there is no such function, since $\lambda$ is the only degree $2$ function on the spectral curve due to the uniqueness of the hyperelliptic involution.
Hence the kernel is trivial and the map is injective.

Since the claim is trivial for $g=0$, it remains to show that the kernel of the map is $1$-dimensional for $g=1$.
The meromorphic function $\lambda^{-1}\nu$ fulfills the properties, so the kernel is at least 1-dimensional.
If there is another such meromorphic function, $\bbR$-independent, then by a suitable linear combination we can construct a meromorphic function with a single simple pole.
But this is then an isomorphism to the Riemann sphere, which is a contradiction.
\end{proof}

\section{Isoperiodic Sets in \texorpdfstring{$\calH^g_0$}{Hg\_0}}
\label{sec:isoperiodic sets}

For $g \geq 2$, we know that $\BPer_a[\calB_a]$ is a $2$-dimensional linear subspace of $\bbR^g$, which we can interpret as a point in the Grassmannian $\Gr(2,\bbR^g)$.
This gives us a map from $\calH^g_0$ to $\Gr(2,\bbR^g)$.
Moreover, from Lemma~\ref{lem:differential omega}, we know that $\calB_a$ has a complex structure $J$, and for $g \geq 2$ the isomorphism of Lemma~\ref{lem:differential period} transfers this complex structure to $\BPer_a[\calB_a] \subset \bbR^g$.
Below we will show how to understand a real plane with a complex structure as a point in $\CP^{g-1}$.
In this section we investigate these two maps.
The study of Riemann surfaces through the periods of holomorphic or meromorphic differentials is a classical but difficult problem, since the relationship between the branch points and periods is nonelementary.
The main result of this section is Theorem~\ref{thm:Whitham deformation}, which shows that the level sets of these two maps in $\calH^g_0$ are submanifolds of dimensions $3$ and $1$, respectively.
Thus $\calH^g_0$ is foliated by the $3$-dimensional isoperiodic sets (away from the higher-order common roots of $\calB_a$), each of which in turn is foliated by the $1$-dimensional sets that preserve both periods and complex structure.
Theorem~\ref{thm:Willmore derivative} then describes the behavior along the $1$-dimensional level sets of a function $\willmore$ that extends the Willmore energy.

Now we explain how to incorporate the information of the complex structure.
The basic example of a real plane with a complex structure is a complex line in $\bbC^g$, where the complex structure is the restriction of multiplication by $\ci$.
Given a real plane $V$ in $\bbR^g$ with complex structure $J$, define
\[
V^\bbC
:= \{ v - \ci J v \in \bbC^g \mid v \in V \}.
\]
There is a $\bbC$-isomorphism between $(V,J)$ and $(V^\bbC,\ci)$, since
$(Jv) - \ci J(Jv) = Jv + \ci v = \ci(v - \ci Jv)$.
In particular, $V^\bbC$ is a complex line, a point in $\Gr(1,\bbC^g) = \CP^{g-1}$.
For the converse, note that $V^\bbC$ contains no purely real vectors.
Given any complex line $W \in \CP^{g-1} \setminus \RP^{g-1}$ choose a nonzero vector $w$.
Then let $W^\bbR \subset \bbR^g$ be the real span of $\Real w$ and $\Imag w$.
It is a plane, else there would be a purely real vector in $W$.
And clearly $W^\bbR$ inherits a complex structure defined on the basis by $J(\Real w) = - \Imag w$ and $J(\Imag w) = \Real w$.
These two operations are inverse to one another, see also~\cite[Section~1.2]{Huybrechts2005}, where $V^\bbC$ is denoted $V^{1,0}$.
Thus we can also understand $\BPer_a[\calB_a]^\bbC$ as a point of $\CP^{g-1} \setminus \RP^{g-1}$.

\begin{definition}
\label{def:isoperiodic sets}
Let $a\in\calH^g_0$.
The \emph{isoperiodic set} $\calH^g_0(a)$ is the connected component containing $a$ of the level set of $\tilde{a} \mapsto \BPer_{\tilde{a}}[\calB_{\tilde{a}}]$ on $\calH^g_0$.
Likewise, $\calH^g_0(a,\tau)$ is the connected component containing $a$ of the level set of $\tilde{a} \mapsto \BPer_{\tilde{a}}[\calB_{\tilde{a}}]^\bbC$ on $\calH^g_0$.
The label $\tau$ is justified by the proof of Theorem~\ref{thm:Whitham deformation}, where fixing this complex line is shown to be equivalent to fixing the ratio $\tau=b_2(0)/b_1(0)$ for a basis of $\calB_a$; its role as the conformal type of a lattice is explained in Section~\ref{sec:adapted bases}.
\end{definition}

Because of the dependence of $\BPer_a$ on a choice of homology basis, these maps are only locally defined, but that is sufficient to define level sets.

Our aim is to understand the isoperiodic sets.
Although the description in terms of Grassmannians is concise and only depends on $a$, it is easier to first work with a closely related notion of isoperiodic set that is more amenable to calculation.
For $a \in \calH^g_0$, let $b_1,b_2$ be any linearly independent elements of $\calP^{g+1}_\bbR$.
Consider the set of $(\tilde{a},\tilde{b}_1,\tilde{b}_2)\in\calH^g_0\times(\calP^{g+1}_\bbR)^2$ such that the periods of $\Theta_{\tilde{b}_k}$ are equal to those of $\Theta_{b_k}$ for $k=1,2$, which we call the \emph{isoperiodic set} of the triple $(a,b_1,b_2)$.
It is a level set of the map
\begin{align}
\label{eq:period map}
\Phi:\calH^g_0\times(\calP^{g+1}_{\bbR})^2&\to\Hom(H_1(\Sigma_a,\bbZ),\bbC^2),&
(a,b_1,b_2)&\mapsto\left(C\mapsto\int_C \Theta_{b_1},\int_C \Theta_{b_2}\right).
\end{align}
The relation to Definition~\ref{def:isoperiodic sets} is as follows: if $(b_1,b_2)$ is a basis for $\calB_a$ then $\tilde{a} \in \calH^g_0(a)$ if and only if $(\tilde{a},\tilde{b}_1,\tilde{b}_2)$ belongs to the isoperiodic set of $(a,b_1,b_2)$ for some $\tilde{b}_1,\tilde{b}_2 \in \calB_{\tilde{a}}$.
As preparation, we describe the kernel of the derivative of this map.
The description of this kernel in terms of polynomial equations is well established~\cite{Hauswirth2016,Kilian2015}.

\begin{lemma}
\label{lem:whitham kernel}
Let $a\in\calH^g_0$, and let $b_1,b_2\in\calP^{g+1}_\bbR$ be linearly independent.
We describe the kernel of $\dd\Phi$ at $(a,b_1,b_2)$.
\begin{enumerate}
\item
A tangent vector $(\dot{a},\dot{b}_1,\dot{b}_2)$ belongs to the kernel if and only if there are $c_1,c_2\in\calP^{g+1}_\bbR$ that solve the \emph{Whitham equations}:
\begin{align}
\label{eq:whitham ab}
2a\dot{b}_k - \dot{a}b_k = \ci(2\lambda a c_k' - (\lambda a' + a)c_k).
\end{align}
Then there exists $Q\in\calP^2_\bbR$ such that the \emph{compatibility equation} holds:
\begin{align}
\label{eq:whitham Q}
b_2c_1 - b_1c_2 = aQ.
\end{align}
The polynomials $c_1,c_2$ and $Q$ are uniquely determined by $(\dot{a},\dot{b}_1,\dot{b}_2)$.

\item
If $g\geq1$ and $b_1$ and $b_2$ have no common root, then $(\dot{a},\dot{b}_1,\dot{b}_2)\mapsto Q$ is an isomorphism from the kernel onto $\calP^2_\bbR$.
In particular the kernel is three-dimensional.

\item
If $g\geq1$ and $b_1$ and $b_2$ have exactly one common root $\beta$, which is of first order, then the kernel is three-dimensional and $Q$ maps it onto $\{Q\in\calP^2_\bbR\mid Q(\beta)=0\}$.
The vectors with $Q=0$ form a line, spanned by the vector with $c_k=\ci\frac{\lambda+\beta}{\lambda-\beta}b_k+\gamma b_k$, where $\gamma\in\bbR$ is the unique value for which $\dot{a}(0)=0$.

\item
If $g=0$, then $a\equiv 1$, $(b_1,b_2)$ is a basis of $\calP^1_\bbR=\calB_a$, and $b_1$ and $b_2$ have no common root.
The kernel is $\{0\}\times(\calP^1_\bbR)^2$, which is four-dimensional, and $Q$ maps it onto $\calP^2_\bbR$ surjectively.
The vectors with $Q=0$ form the line given by $c_k=\gamma b_k$ with $\gamma\in\bbR$, which give the infinitesimal rotation of the basis within $\calB_a$.
\end{enumerate}
\end{lemma}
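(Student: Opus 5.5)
Part 1 rests on the standard identity for the variation of a hyperelliptic differential. Differentiating $\Theta_b = b\,\dd\lambda/(\lambda\nu)$ along a curve $(a(t),b(t))$, with $\nu^2=\lambda a$, gives
\[
\dot\Theta_b = \frac{2a\dot b - \dot a b}{2\lambda a\,\nu}\,\dd\lambda .
\]
For any polynomial $c$ we also have
\[
\dd\!\left(\frac{c\,\nu}{\lambda a}\right) = \frac{2\lambda a c' - (\lambda a'+a)c}{2\lambda a\,\nu}\,\dd\lambda .
\]
Periods are computed on the fixed cycles of a distinct root neighborhood, where the differentiation is legitimate. So the derivative of the periods of $\Theta_{b_k}$ vanishes if and only if the meromorphic differential $\dot\Theta_{b_k}$ is exact. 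Since $\dot\Theta_{b_k}$ is odd under $\sigma$, a primitive can be chosen odd, hence of the form $f\nu$ with $f$ rational in $\lambda$.

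The plan is to locate the poles of $\dot\Theta_{b_k}$. There are at most simple poles in the local parameter $\sqrt{\lambda-\alpha}$ at the branch points (the $\dot a$ term gives $\nu^{-3}$), and triple poles at $0$ and $\infty$. This forces the primitive to be $\ci\, c_k\nu/(\lambda a)$ with $c_k$ a polynomial of degree at most $g+1$. Comparing with the symmetry under $\rho$ (using $\rho^*\dot\Theta = -\overline{\dot\Theta}$, inherited from~\eqref{eqn:Theta symmetries}) gives $c_k\in\calP^{g+1}_\bbR$, with the factor $\ci$ absorbing the reality convention. This yields~\eqref{eq:whitham ab}. The converse direction is immediate. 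Uniqueness of $c_k$ holds because adding a constant to the primitive is incompatible with oddness.

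For~\eqref{eq:whitham Q}, multiply the $k=1$ equation by $b_2$, the $k=2$ equation by $b_1$, and subtract. With $D=b_2c_1-b_1c_2$ this gives
\[
2a(\dot b_1 b_2-\dot b_2 b_1) = \ci\bigl(2\lambda a D' - (\lambda a'+a)D\bigr).
\]
Modulo $a$, this says $\lambda a' D\equiv 0$. Since the roots of $a$ are simple and nonzero, $a$ divides $D$. Degree counting ($\deg D\le 2g+2$) together with the reality condition gives $Q=D/a\in\calP^2_\bbR$.

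For parts 2 and 3, the plan is to invert the map to $Q$. Given $Q$, solve $b_2c_1-b_1c_2=aQ$ for $(c_1,c_2)\in(\calP^{g+1}_\bbR)^2$. When $b_1,b_2$ have no common root this is a Bezout problem. The solutions form an affine space whose direction is $\{(hb_1,hb_2)\}$, and degrees force $h$ to be a constant $\gamma$; reality makes $\gamma$ real, since $\rho$ sends $b_k$ to $b_k$. Next, reading~\eqref{eq:whitham ab} modulo $a$ determines $\dot a$ on the roots of $a$: evaluating at a root gives $\dot a(\alpha)b_k(\alpha) = \ci\,\alpha a'(\alpha)c_k(\alpha)$, and the two equations for $k=1,2$ are consistent precisely because of~\eqref{eq:whitham Q}. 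This determines $\dot a$ up to adding a multiple of $a$, and the normalization $\dot a(0)=0$ fixes it. The remaining freedom $\gamma$ is killed as well, since $c=\gamma b$ contributes $\dot a = -\ci\gamma\cdot(\ldots)$, i.e. a multiple of $a$ whose constant term is nonzero. Finally $\dot b_k$ is recovered by division by $a$. This gives a bijection with $\calP^2_\bbR$, of dimension $3$.

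In case 3, solvability requires $Q(\beta)=0$, cutting the target to dimension $2$. The common factor $(\lambda-\beta)$ enlarges the homogeneous solutions to $h=(\ci(\lambda+\beta)/(\lambda-\beta)+\gamma)$ times $b_k$. This is real because $\beta\in\bbS^1$, which holds since a simple common root must be $\rho$-invariant. Fixing $\gamma$ by $\dot a(0)=0$ leaves one extra line, for a total dimension of $3$. In case 4, with $a\equiv1$, everything is explicit.

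The main obstacle I expect is the careful pole and degree analysis showing that the primitive has exactly the form $c\nu/(\lambda a)$ with $c\in\calP^{g+1}_\bbR$. The secondary obstacle is the precise bookkeeping that makes $\dot a(0)=0$ eliminate $\gamma$ while the other constraints remain consistent.
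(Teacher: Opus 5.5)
Your treatment of (i) and of the compatibility equation is essentially the paper's; the paper just cites \cite[Lemma~3.1]{Carberry2019} for the shape of the primitive. One slip there: $\dot\Theta_{b_k}$ has at most residue-free \emph{double} poles at every branch point, $\lambda=0,\infty$ included. Its odd primitive $\ci c_k/\nu=\ci c_k\nu/(\lambda a)$ therefore has simple poles there. So ``simple at the $\alpha_j$, triple at $0,\infty$'' is not right, although your final form of the primitive is.

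For (ii) and (iii) your route genuinely differs. The paper never solves the Whitham system. It gets $\dim\ker\dd\Phi\geq3$ by counting: $\Phi$ lands in a real $4g$-dimensional space, because the $A$-periods are real and determine the real parts of the $B$-periods. It then proves only injectivity of $Q$, respectively of $(\beta^{-(g+1)/2}c_1(\beta),Q)$, so surjectivity comes for free. You instead build the inverse: solve $b_2c_1-b_1c_2=aQ$, interpolate $\dot a$ at the roots of $a$, and divide by $a$. That is more constructive and avoids the target dimension of $\Phi$, but it owes two existence statements.
\begin{enumerate}
\item The map $(c_1,c_2)\mapsto b_2c_1-b_1c_2$ sends $(\calP^{g+1}_\bbR)^2$ onto $\calP^{2g+2}_\bbR$. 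This follows from the dimensions $2g+4$ and $2g+3$ and the kernel $\bbR(b_1,b_2)$; note that ``no common root'' also excludes a common root at $\infty$, since roots at $0$ and $\infty$ are paired. You need the analogous statement in case (iii), after dividing $b_1,b_2$ by their common real linear factor.
\item The interpolant $\dot a$ can be taken in $\calP^{2g}_\bbR$. Average a complex interpolant with $\lambda^{2g}\overline{\dot a(\bar\lambda^{-1})}$; after that, $\dot a$ is unique only up to \emph{real} multiples $2sa$.
\end{enumerate}

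The step you flagged is where your sketch actually goes wrong. Replacing $c_k$ by $c_k+\gamma b_k$ adds $\gamma$ times the trivial solution. Its $\dot a$-component $\ci(\lambda a'-ga)$ is \emph{not} a multiple of $a$: modulo $a$ it is $\ci\lambda a'\neq0$. Were it a real multiple of $a$, it would merge with the $s$-freedom, and one real parameter would face the two real conditions contained in $\dot a(0)=0$.

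The correct mechanism is the one the paper uses in its injectivity step. The $\gamma$-freedom shifts $\dot a(0)$ by $-\ci g\gamma$, which is purely imaginary, while the $s$-freedom shifts it by the real number $2s$. So the single complex equation $\dot a(0)=0$ fixes $s$ and $\gamma$ together, and this is exactly where $g\geq1$ enters. In (iii) the same equation fixes $s$ and $\gamma$ in terms of the coefficient of $\ci\frac{\lambda+\beta}{\lambda-\beta}b_k$, which yields the line with $Q=0$. With these repairs your argument proves (ii) and (iii); (iv) is indeed a direct computation.
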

\begin{proof}
To describe the kernel of $\dd\Phi$ we introduce the Whitham deformations of the triples $(a,b_1,b_2)$.
If the periods are fixed then for all cycles $C$ we have $\int_C \dot{\Theta}_{b_k} = 0$.
This fact, together with the fact that $\Theta_{b_k}$ is residue free, tells us that $\dot{\Theta}_{b_k}$ is an exact differential.
More precisely, $\dot{\Theta}_{b_k}$ is the exterior derivative of a meromorphic function which has first-order poles at $\lambda = 0, \infty$, is antisymmetric with respect to $\sigma$ and on which $\rho$ acts as negative complex conjugation (see~\cite[Lemma~3.1]{Carberry2019}).
This is equivalent to $\dot{\Theta}_{b_k} = \dd \left( \frac{\ci c_k(\lambda)}{\nu} \right)$ for some $c_k \in \calP^{g+1}_\bbR$.
Inserting \eqref{def:theta} for $\Theta_{b_k}$, differentiating in $t$, and clearing denominators gives the Whitham equation~\eqref{eq:whitham ab}.
Conversely, if~\eqref{eq:whitham ab} holds then $\dot{\Theta}_{b_k} = \dd \left( \frac{\ci c_k}{\nu} \right)$ is exact, so all its periods vanish.

To derive the compatibility equation, we eliminate $\dot{a}$ by multiplying these equations for $k=1,2$ with $b_2,b_1$, respectively, and subtracting.
The result is
\begin{equation}
\label{eq:whitham antisymmetrize}
2a(b_2\dot{b}_1 - b_1\dot{b}_2)
= \ci \left[ 2\lambda a (b_2 c_1' - b_1 c_2') - (\lambda a' + a)(b_2 c_1 - b_1 c_2) \right].
\end{equation}
Every term contains either $a$ or $\lambda a'$ and rearranging shows that $a$ divides $\ci \lambda a' (b_2 c_1 - b_1 c_2)$.
As every root of $a$ is simple, $a$ is coprime to $a'$.
Moreover, since $a(0) = 1$, no root of $a$ is zero.
Hence $a$ divides $b_2 c_1 - b_1 c_2$ and the compatibility equation~\eqref{eq:whitham Q} holds for some polynomial $Q \in \calP^2_{\bbR}$.

We note the following general argument that for a given $(\dot{a},\dot{b}_1,\dot{b}_2)$ the equations~\eqref{eq:whitham ab} and~\eqref{eq:whitham Q} determine polynomials $c_k$ and $Q$ uniquely, if they exist.
The linear map 
\begin{equation}
\label{eq:c injective}
c\mapsto 2\lambda ac'-(\lambda a'+a)c = 2\nu^3 \frac{\dd}{\dd\lambda}\left( \frac{c}{\nu} \right)
\end{equation}
is injective on $\calP^{g+1}_\bbR$, since for any $c$ in the kernel the function $c/\nu$ is constant, which implies $c=0$.
Moreover, wherever $c_k$ exists it depends smoothly on $a \in \calH^g_0$ and on $(\dot{a},\dot{b}_k)$, as the solution of a linear system in the coefficients.
Likewise, since $aQ$ is polynomial in $b_1,b_2,c_1,c_2$ and multiplication by $a$ is injective, so too $Q$ is unique and smooth in the data.
This is used in the proof of Theorem~\ref{thm:Willmore derivative}.
This completes the proof of~(i).

Two solutions of~\eqref{eq:whitham ab} will be useful.
The triple $(2a,b_1,b_2)$ solves it with $c_k=0$, and the \emph{trivial solution}
\begin{align*}
(\dot{a},\dot{b}_1,\dot{b}_2)
=\ci\Big(\lambda a'-ga,\lambda b_1'-\tfrac{g+1}{2}b_1,\lambda b_2'-\tfrac{g+1}{2}b_2\Big)
\end{align*}
solves it with $c_k=b_k$.
The trivial solution belongs to $\calP^{2g}_\bbR\times(\calP^{g+1}_\bbR)^2$ by the following computation for $p\in\bbC^m[\lambda]$:
\begin{align*}
m\lambda^m\overline{p(\bar{\lambda}^{-1})}-2\lambda\tfrac{\dd}{\dd\lambda}\left(\lambda^m\overline{p(\bar{\lambda}^{-1})}\right)&=m\lambda^m\overline{p(\bar{\lambda}^{-1})}-2m\lambda^m\overline{p(\bar{\lambda}^{-1})}+2\lambda^{m+1}\overline{\bar{\lambda}^{-2}\tfrac{\dd}{\dd\bar{\lambda}^{-1}}p(\bar{\lambda}^{-1})}\\&=-\lambda^m\left(\overline{mp(\bar{\lambda}^{-1})-2\bar{\lambda}^{-1}p'(\bar{\lambda}^{-1})}\right).
\labelthis{eq:derivative conjugate}
\end{align*}
For $g\geq1$ these two solutions have $\dot{a}(0)=2$ and $\dot{a}(0)=-\ci g$, respectively, so adding real multiples of them to any solution of~\eqref{eq:whitham ab} achieves $\dot{a}(0)=0$.

Now let $g\geq1$.
For (ii) and (iii) we need a lower bound on the dimension of the kernel.
The $A$-periods of $\Theta_{b_k}$ are real, and the real parts of its $B$-periods are determined by its $A$-periods, by the formula preceding Lemma~\ref{lem:differential omega}.
Hence $\Phi$ takes values in a real vector space of dimension $4g$.
Since $\calH^g_0$ is open in $\{a\in\calP^{2g}_\bbR \mid a(0)=1\}$, which has dimension $2g-1$, the dimension of $\ker\dd\Phi$ is at least $(2g-1)+2(g+2)-4g = 3$.

For (ii) we show that the map from the kernel to $Q\in\calP^2_\bbR$ is injective.
It then follows that the dimension of the kernel is at most $\dim\calP^2_\bbR = 3$, and therefore exactly $3$.
Assume therefore that $(\dot{a},\dot{b}_1,\dot{b}_2)\in\ker\dd\Phi$ and that the corresponding $Q=0$.
Then $b_2c_1 = b_1c_2$, and since $b_1$ and $b_2$ have no common root, $(c_1,c_2) = r(b_1,b_2)$ for some $r\in\bbR$.
Subtracting $r$ times the trivial solution leaves a solution of~\eqref{eq:whitham ab} with $c_k=0$, that is, $2a\dot{b}_k = \dot{a}b_k$ for $k=1,2$.
Every root of $a$ is simple and is not a common root of $b_1$ and $b_2$, so $a$ divides the first component of this solution.
This component and $a$ both lie in $\calP^{2g}_\bbR$, so it equals $2sa$ for some $s\in\bbR$, and then the other components are $sb_k$.
Hence (compare~\cite[Lemma~4.6]{Carberry2019})
\[
(\dot{a},\dot{b}_1,\dot{b}_2) = r\ci\Big(\lambda a'-ga,\lambda b_1'-\tfrac{g+1}{2}b_1,\lambda b_2'-\tfrac{g+1}{2}b_2\Big) + s(2a,b_1,b_2).
\]
Since $a(0)=1$, evaluating the first component at $\lambda=0$ gives $0 = \dot{a}(0) = -\ci g r + 2s$.
As $g\geq1$, both $r,s\in\bbR$ vanish, and therefore $(\dot{a},\dot{b}_1,\dot{b}_2)=0$.
This proves~(ii).

For (iii), suppose that $b_1$ and $b_2$ have exactly one common root $\beta$, which is of first order.
The roots of a polynomial in $\calP^{g+1}_\bbR$ are symmetric under $\lambda\mapsto\bar{\lambda}^{-1}$, with $0$ paired with $\infty$, so a single common root satisfies $|\beta|=1$.
Write $b_k = p_\beta\tilde{b}_k$ with $p_\beta = \ci\beta^{-1/2}(\lambda-\beta)$.
Then $\tilde{b}_1$ and $\tilde{b}_2$ have no common root, and we assume $\tilde{b}_1(\beta)\neq0$ without loss of generality.
Evaluating~\eqref{eq:whitham Q} at $\beta$ gives $a(\beta)Q(\beta)=0$, so $Q(\beta)=0$.
By the reality condition, $\beta^{-1}Q(\beta)$ is real, so this is a single real condition and $\{Q\in\calP^2_\bbR \mid Q(\beta)=0\}$ is two-dimensional.
We show that the map
\[
(\dot{a},\dot{b}_1,\dot{b}_2) \mapsto \big(\beta^{-(g+1)/2}c_1(\beta), Q\big)
\in \bbR \times \{Q\in\calP^2_\bbR \mid Q(\beta)=0\}
\]
is injective on the kernel, where $\beta^{-(g+1)/2}c_1(\beta)$ is real by the reality condition $c_1 \in \calP^{g+1}_\bbR$ for any fixed choice of the square root.
Suppose therefore that $c_1(\beta) = 0$ and $Q = 0$.
Then $\tilde{b}_2c_1 = \tilde{b}_1c_2$, so $(c_1,c_2) = v(\tilde{b}_1,\tilde{b}_2)$ for some $v\in\calP^1_\bbR$.
Moreover $v(\beta)\tilde{b}_1(\beta) = c_1(\beta) = 0$ gives $v(\beta)=0$, so $v$ is a real multiple of $p_\beta$ and $(c_1,c_2) = r(b_1,b_2)$ for some $r\in\bbR$.
Since $|\beta|=1$ and $a$ has no roots on $\bbS^1$, $\beta$ is not a root of $a$, and the rest of the proof of~(ii) applies verbatim.
As the kernel has dimension at least $3$, this injective map is an isomorphism onto the $3$-dimensional space $\bbR \times \{Q\in\calP^2_\bbR \mid Q(\beta)=0\}$.
In particular $Q$ maps the kernel onto $\{Q\in\calP^2_\bbR \mid Q(\beta)=0\}$, and the vectors with $Q=0$ form a line on which $c_1(\beta)\neq0$.
On this line $(c_1,c_2) = v(\tilde{b}_1,\tilde{b}_2)$ with $v\in\calP^1_\bbR$ and $v(\beta)\neq0$.
Since $|\beta|=1$, the space $\calP^1_\bbR$ is spanned by $p_\beta$ and $\beta^{-1/2}(\lambda+\beta)$, and $\beta^{-1/2}(\lambda+\beta)\tilde{b}_k = -\ci\frac{\lambda+\beta}{\lambda-\beta}b_k$.
So after rescaling, the line is spanned by the vector with $c_k=\ci\frac{\lambda+\beta}{\lambda-\beta}b_k+\gamma b_k$ for some $\gamma\in\bbR$, which is unique because the line is one-dimensional.
This proves~(iii).

Finally let $g=0$.
There is only one spectral curve in this case, $a \equiv 1$, and $\calH^0 = \{1\}$ is zero-dimensional.
There are no cycles, so $\Phi$ is trivial and the kernel of $\dd\Phi$ is the whole tangent space $\{0\}\times(\calP^1_\bbR)^2$.
Since $\calB_a\subset\calP^1_\bbR$ is two-dimensional, $\calB_a=\calP^1_\bbR = \{ r\ci \beta^{-1/2}(\lambda - \beta) \mid r \in \bbR, \beta \in \bbS^1\}$.
The linearly independent $(b_1,b_2)$ are a basis of it and have no common root.
With $a=1$ and $\dot{a}=0$ the Whitham equations read $2\dot{b}_k=\ci(2\lambda c_k'-c_k)$, which for every $(c_1,c_2)\in(\calP^1_\bbR)^2$ determines a unique kernel vector, and $Q=b_2c_1 - b_1c_2$.
Unlike the case $g\geq1$, the trivial solution $(0,b_1,b_2)$ is not removed by the condition $a(0)=1$.
If $Q=0$ it follows that $c_k = rb_k$ for some $r \in \bbR$, since $b_1,b_2$ are coprime.
Therefore the map $(c_1,c_2)\mapsto Q$, from the four-dimensional space $(\calP^1_\bbR)^2$ to the three-dimensional space $\calP^2_\bbR$, has one-dimensional kernel and is surjective.
The kernel vector for $c_k=rb_k$ has $\dot{b}_k(0)=-\frac{\ci r}{2}b_k(0)$ and is thus an infinitesimal rotation of $\calB_a$.
\end{proof}

With those preparations made, we can now show that the isoperiodic sets are submanifolds under mild assumptions.

\begin{theorem}
\label{thm:Whitham deformation}
Let $g\geq2$, let $a\in \calH^g_0$, and suppose that the elements of $\calB_a$ have at most one common root, and that it is of first order.
The level set $\calH^g_0(a)$ of $\tilde{a} \mapsto \BPer_{\tilde{a}}[\calB_{\tilde{a}}] \in \Gr(2,\bbR^g)$ locally at $a$ is a three-dimensional submanifold of $\calH^g_0$.
Further, the level set $\calH^g_0(a,\tau)$ of $\tilde{a} \mapsto \BPer_{\tilde{a}}[\calB_{\tilde{a}}]^\bbC \in \CP^{g-1} \setminus \RP^{g-1}$ is locally at $a$ a one-dimensional submanifold of $\calH^g_0$.
\end{theorem}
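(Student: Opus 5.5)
The plan is to first study the level sets of the map $\Phi$ from \eqref{eq:period map} restricted to pairs $(b_1,b_2)$ forming a basis of $\calB_a$. I will then project these level sets to $\calH^g_0$ and identify them with $\calH^g_0(a)$ and $\calH^g_0(a,\tau)$.

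\emph{Step 1: the level set of $\Phi$ is a $3$-manifold.} Fix a basis $(b_1,b_2)$ of $\calB_a$. The common roots of $b_1,b_2$ are exactly the common roots of $\calB_a$, so by hypothesis Lemma~\ref{lem:whitham kernel}(ii) or (iii) applies. Either way $\ker\dd\Phi$ at $(a,b_1,b_2)$ has dimension exactly $3$. As shown in that proof, $\Phi$ takes values in a real space of dimension $4g$, and its domain has dimension $(2g-1)+2(g+2)=4g+3$. Hence $\dd\Phi$ is surjective onto this $4g$-dimensional space at $(a,b_1,b_2)$, and therefore also nearby. By the implicit function theorem the level set $L$ through $(a,b_1,b_2)$ is locally a $3$-dimensional submanifold. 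Since $\Phi$ records all periods, the $A$-periods of $\tilde b_k$ vanish along $L$, so $\tilde b_k\in\calB_{\tilde a}$.

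\emph{Step 2: project $L$ to $\calH^g_0$.} Let $\pi(\tilde a,\tilde b_1,\tilde b_2)=\tilde a$ on $L$.
\begin{itemize}
\item $\pi$ is an immersion. A tangent vector with $\dot a=0$ has $\dot b_k\in\calB_a$ with vanishing $B$-periods. By the injectivity in Lemma~\ref{lem:differential period} (this needs $g\ge2$) we get $\dot b_k=0$.
\item $\pi$ is locally injective. For fixed $\tilde a$, the same injectivity shows that $\tilde b_1,\tilde b_2$ are determined by their periods.
\item $\pi(L)$ is locally the level set. If $\tilde a$ near $a$ has $\BPer_{\tilde a}[\calB_{\tilde a}]=\BPer_a[\calB_a]$, then there are unique $\tilde b_k\in\calB_{\tilde a}$ with $\BPer_{\tilde a}(\tilde b_k)=\BPer_a(b_k)$. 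Their $A$-periods are zero, so $(\tilde a,\tilde b_1,\tilde b_2)\in\Phi^{-1}(\Phi(a,b_1,b_2))$. Smoothness of this choice follows from the trivialization of $\calB$ in Lemma~\ref{lem:differential period}, so the point lies on the local component $L$. The converse inclusion is clear.
\end{itemize}
Hence $\calH^g_0(a)$ is locally the embedded $3$-manifold $\pi(L)$.

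\emph{Step 3: the complex structure.} On $L$ the periods of the basis are fixed, so the complex line $\BPer[\calB_{\tilde a}]^\bbC$ is fixed exactly when the complex structure $J$ in this basis is fixed. Since $J b^\omega=b^{\ci\omega}$, this is equivalent to fixing $\tau=\tilde b_2(0)/\tilde b_1(0)\in\bbC\setminus\bbR$. So $\calH^g_0(a,\tau)$ is locally $\pi$ of a level set of the function $\tau$ on $L$, and it suffices to show $\dd\tau$ has real rank $2$ on $\ker\dd\Phi$.

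I expect this rank computation to be the main point, and I would do it directly from Lemma~\ref{lem:whitham kernel}. First normalize to $\dot a(0)=0$, which is forced on $\calH^g_0$ since $a(0)=1$. Evaluating the Whitham equation \eqref{eq:whitham ab} at $\lambda=0$ gives $\dot b_k(0)=-\tfrac{\ci}{2}c_k(0)$. Evaluating \eqref{eq:whitham Q} at $0$ gives $b_2(0)c_1(0)-b_1(0)c_2(0)=Q(0)$. Combining,
\[
\dot\tau=\frac{\dot b_2(0)b_1(0)-b_2(0)\dot b_1(0)}{b_1(0)^2}=\frac{\ci\,Q(0)}{2\,b_1(0)^2}.
\]
It remains to see that $Q\mapsto Q(0)$ is onto $\bbC$ on the image of $Q$ described in the lemma.
\begin{itemize}
\item In case (ii) this image is all of $\calP^2_\bbR$, whose constant coefficient is an arbitrary complex number.
\item In case (iii) the image is $p_\beta\calP^1_\bbR$, and $Q(0)=-\ci\beta^{1/2}q(0)$ with $q(0)\in\bbC$ arbitrary.
\end{itemize}
Thus $\dd\tau$ is surjective, its fibre in $L$ is a $1$-manifold, and its projection under the immersion $\pi$ is $\calH^g_0(a,\tau)$ locally.
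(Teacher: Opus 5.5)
Your proposal is correct and follows essentially the same route as the paper: the level set of $\Phi$ is a $3$-manifold by Lemma~\ref{lem:whitham kernel}(ii)/(iii), the projection $\pi$ is an injective immersion by injectivity of $\BPer_a|_{\calB_a}$, and $\dd\tau$ is surjective because $\dot\tau = \ci Q(0)/(2b_1(0)^2)$ and $Q(0)$ ranges over $\bbC$ in both cases. The only difference is how you identify the level set locally with $\pi(L)$: you use a smooth local section built from the trivialization of $\calB$, whereas the paper uses that same local inverse of the period map to prove properness of $\pi$ and then applies the proper-injective-immersion criterion.
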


A curve with a single differential has a universal deformation space~\cite{Hauswirth2017arxiv,Carberry2025arxiv};
common roots of a pair of differentials are an obstacle to a smooth deformation theory.

\begin{proof}
Choose a basis $(b_1,b_2)$ of $\calB_a$.
First note that the isoperiodic set of the triple $(a,b_1,b_2)$ is a submanifold of dimension $3$, because it is the level set of $\Phi$ from~\eqref{eq:period map} and Lemma~\ref{lem:whitham kernel} showed that its rank is constant.
Let $\pi$ be the projection from the isoperiodic set to $\calH^g_0$ given by $(a,b_1,b_2) \mapsto a$.
We claim that $\pi$ is a proper injective immersion.
It then follows by~\cite[Prop.~4.22]{Lee2012} that the image of the isoperiodic set of triples is a closed submanifold of $\calH^g_0$.

For injectivity, suppose both $(a,b_1,b_2)$ and $(a,\tilde{b}_1,\tilde{b}_2)$ belong to the isoperiodic set.
But then $\BPer_a(b_k - \tilde{b}_k) = 0$ and Lemma~\ref{lem:differential period} gives $b_k = \tilde{b}_k$.
Next, the kernel of $\dd\pi$ consists of vectors of the form $(0,\dot b_1,\dot b_2)$.
Since $\dot{a}=0$, the derivative $\frac{\dd}{\dd t}\Theta_{b_k}=\Theta_{\dot{b}_k}$ has all periods zero, so $\dot{b}_k=0$ by Lemma~\ref{lem:differential period}.
Thus $\pi$ is an immersion.

To show properness, we first bound the differentials by constructing local inverses to the period map.
In order to speak of boundedness, we equip the finite dimensional space $\calP^{g+1}_\bbR$ with some norm.
Fix an $a \in \calH^g_0$ and choose a complementary subspace of $\BPer_{a}[\calB_{a}] \subset \bbR^g$.
This defines a projection $\mathrm{pr}_a : \bbR^g \to \BPer_{a}[\calB_{a}]$.
The composition $\mathrm{pr}_a \circ \BPer$ then is a smooth bundle morphism from $\calB$ to the trivial bundle $\calH^g_0\times \BPer_{a}[\calB_{a}]$.
Its derivative at $a$ is the identity by construction, so there is a neighborhood of $a$ on which there is a smooth inverse.
Now take any compact subset $K \subset \calH^g_0$.
We show sequential compactness of $\pi^{-1}[K]$.
For any sequence in this set, we first pass to a subsequence such that the image under $\pi$ converges in $K$.
There is an inverse to the period map in a neighborhood of this limit $a \in K$.
Since $\pi^{-1}[K]$ is contained in the isoperiodic set by definition, the sequence is the image of a constant under the inverse of the period map, and therefore converges.
This proves properness of $\pi$, and the first statement of the theorem.

Next suppose that at $a$ we choose $b_1,b_2$ such that $b_2 = J b_1$. 
We show that $b_2 = Jb_1$ continues to hold if and only if the Whitham flow fixes $\BPer_a[\calB_a]^\bbC$.
If $\BPer_a[\calB_a]^\bbC$ is fixed, so too is $\BPer_a[\calB_a]$ and we are in the isoperiodic set.
Further, it is the complex span of
\[
\BPer_a(b_1) - \ci \BPer_a(Jb_1) 
= \left(-\ci\int_{B_j} \Theta_{b_1} - \int_{B_j} \Theta_{Jb_1}\right)_{j=1,\dots,g}.
\]
Since the real part $\BPer_a(b_1)$ of this vector is fixed, so too must the imaginary part $\BPer_a(Jb_1)$ be.
At $t=0$, $Jb_1 = b_2$ since $b_2(0) = \ci$ and $b_1(0) = 1$.
Therefore $\int_{B_j} \Theta_{Jb_1} = \int_{B_j} \Theta_{b_2}$ for all $j$ along the flow.
Since a differential in $\calB_a$ is determined by its periods (Lemma~\ref{lem:differential period}), we conclude that $b_2 = Jb_1$ along any flow that fixes $\BPer_a[\calB_a]^\bbC$.
The converse follows by definition.
Therefore the flow that fixes $\BPer_a[\calB_a]^\bbC$ amounts to the additional constraint $b_2(0) = \ci b_1(0)$ along the flow, for this initial choice with $b_2 = J b_1$.
Were we to choose an arbitrary basis $(b_1,b_2)$ initially, with $b_2(0) = \tau b_1(0)$ for some $\tau \in \bbC$, then this equation holds along the flow if and only if $\BPer_a[\calB_a]^\bbC$ is fixed.

The map $(a,b_1,b_2)\mapsto\tau=b_2(0)/b_1(0)$ is a submersion on the isoperiodic set, as we now show.
From the formula~\eqref{eq:whitham ab} we get
\begin{equation}
\label{eq:dot tau Q}
\dot{\tau}
= \frac{\dot{b}_2(0)b_1(0) - b_2(0)\dot{b}_1(0)}{b_1(0)^2}
= \frac{\ci}{2} \frac{b_2(0)c_1(0) - c_2(0)b_1(0)}{b_1(0)^2}
= \frac{\ci a(0)}{2b_1(0)^2}Q(0).
\end{equation}
Hence the kernel of $\dd\tau$ on the tangent space of the isoperiodic set consists of the vectors with $Q(0)=0$, that is, $Q = Q'(0)\lambda$.
If $b_1$ and $b_2$ have no common root, then by Lemma~\ref{lem:whitham kernel}(ii) $Q$ ranges over $\calP^2_\bbR$, so $Q(0)$ ranges over $\bbC$, and the kernel of $\dd\tau$ is the line $\{Q\in\bbR\lambda\}$.
If they have a common root $\beta$, then by Lemma~\ref{lem:whitham kernel}(iii) $Q$ ranges over $\{Q\in\calP^2_\bbR\mid Q(\beta)=0\}$.
If moreover $Q(0)=0$, then $Q=Q'(0)\lambda$ with $Q'(0)\beta=0$, so $Q=0$.
Hence $Q\mapsto Q(0)$ maps $\{Q\in\calP^2_\bbR\mid Q(\beta)=0\}$ isomorphically onto $\bbC$, and the kernel of $\dd\tau$ is the line of vectors with $Q=0$.
In both cases $\dd\tau$ is surjective, and its kernel is one-dimensional.
This proves the second statement of the theorem.
\end{proof}

For the $1$-dimensional submanifold in particular, it is natural to look for a real-valued function that can serve as a parameter.
In our study of the genus-$2$ case, which is always doubly periodic, we observed that the Willmore energy $\int H^2\,\dd A$ plays this role.
There exists a formula for the Willmore energy of a CMC immersion with doubly periodic metric~\cite[Theorem~5.7]{Knopf2025}.
Suppose $(b_1,b_2)$ is a basis of the lattice of differentials that meet all conditions (P1)--(P3) of Section~\ref{sec:differentials}, so that they determine a fundamental domain of the metric.
The Willmore energy of this piece of the immersion is
\begin{equation}
\label{eq:willmore residue}
2\ci\Res_{\lambda=0}q_{b_2}\,\Theta_{b_1},
\end{equation}
where $q_{b_2}$ is a local primitive of $\Theta_{b_2}$ near $\lambda=0$.
The residue does not depend on the choice of primitive, since $\Theta_{b_1}$ has no residue.
The expression~\eqref{eq:willmore residue} is real for any $b_1,b_2\in\calB_a$, as we see by comparing the residues at $\lambda=0$ and $\lambda=\infty$.
Cutting the curve along a canonical basis of homology gives a simply connected polygon on which $q_{b_2}$ is single-valued, and the reciprocity law~\cite[p.~241]{Griffiths1994} gives
\[
2\pi\ci\left(\Res_{\lambda=0} + \Res_{\lambda=\infty}\right) q_{b_2}\,\Theta_{b_1}
= \sum_j \left( \int_{A_j}\Theta_{b_2} \int_{B_j}\Theta_{b_1} - \int_{B_j}\Theta_{b_2} \int_{A_j}\Theta_{b_1} \right)
= 0,
\]
since the $A$-periods of differentials of $\calB_a$ vanish.
On the other hand, $\rho^\ast\Theta_b = -\overline{\Theta_b}$ implies $\rho^\ast q_b = -\overline{q_b} + C$ for a constant $C$, so $\overline{\rho^\ast(q_{b_2}\Theta_{b_1})} = q_{b_2}\Theta_{b_1} - \bar{C}\,\Theta_{b_1}$.
As $\rho$ is antiholomorphic and exchanges $\lambda = 0,\infty$, and $\Theta_{b_1}$ has no residues, we obtain $\Res_{\lambda=\infty} q_{b_2}\Theta_{b_1} = \overline{\Res_{\lambda=0} q_{b_2}\Theta_{b_1}}$.
Together these show that $\Res_{\lambda=0} q_{b_2}\Theta_{b_1}$ is imaginary.

The residue in~\eqref{eq:willmore residue} can be computed explicitly in terms of $b_1$ and $b_2$.
The point of $\Sigma_a$ over $\lambda=0$ is a branch point, so we use the local coordinate $\zeta$ with $\lambda = \zeta^2$ and $\nu = \zeta\sqrt{a(\zeta^2)}$, where the square root is the branch with value $1$ at $\zeta=0$.
Recall that $a(0)=1$, so that $1/\sqrt{a(\lambda)} = 1 - \tfrac{1}{2}a'(0)\lambda + O(\lambda^2)$.
Hence for any $b\in\calP^{g+1}_\bbR$
\[
\Theta_b
= \frac{b(\zeta^2)}{\sqrt{\zeta^2 a(\zeta^2)}}\,\frac{2\zeta\,\dd \zeta}{\zeta^2}
= 2\left(b(0)\zeta^{-2} + b'(0) - \tfrac{1}{2}a'(0)b(0) + O(\zeta^2)\right)\dd \zeta,
\]
and $q_b = -2b(0)\zeta^{-1} + C + 2\bigl(b'(0) - \tfrac{1}{2}a'(0)b(0)\bigr)\zeta + O(\zeta^3)$ for a constant $C$.
The residue of $q_{b_2}\Theta_{b_1}$ is the coefficient of $\zeta^{-1}\dd \zeta$ in the product:
\begin{align*}
\Res_{\lambda=0}q_{b_2}\,\Theta_{b_1}
&= 4\bigl(b_1(0)b_2'(0) - b_1'(0)b_2(0)\bigr).
\end{align*}
The other choice of sign of $\nu$ changes the sign of both $\Theta_{b_1}$ and $q_{b_2}$, and so does not change the residue.
This should be compared with the similar energy formula of Hitchin~\cite[Theorem~13.17(ii)]{Hitchin1990} for conformal harmonic maps from a torus to $\bbS^3$.

This expression for the residue shows why it is independent of the choice of differentials that span the lattice: a change of basis scales the residue by the determinant, and a change of basis of a lattice has determinant $1$.
Therefore to extend this formula to an arbitrary basis of $\calB_a$, we divide out by the area of the parallelogram spanned by the periods of $b_1$ and $b_2$, which also scales by the determinant under change of basis.

\begin{definition}
\label{def:willmore}
Let $a\in \calH^g_0$ and choose a basis $(b_1,b_2)$ of $\calB_a$.
We define a quantity that we call the normalized Willmore energy by
\begin{align}
\label{eq:willmore}
\willmore(a) = \frac{8\ci\bigl(b_1(0)b_2'(0) - b_1'(0)b_2(0)\bigr)}{\operatorname{Area}(\BPer_a(b_1),\BPer_a(b_2))} ,
\end{align}
where $\operatorname{Area}$ is the area of the parallelogram spanned by the two vectors in $\bbR^g$, with sign given by the orientation from the complex structure $J$.
\end{definition}

Our general approach is to treat $\calH^g$ as the primary space, the set of hyperelliptic curves, rather than the space of triples $(a,b_1,b_2)$.
Therefore it is useful to interpret $\willmore$ as an object that only depends on $a$.
The above discussion shows that this is almost true: the definition of $\BPer_a$ depends on the choice of canonical basis of homology.
Changing the canonical basis changes $\bbR^g$ by a $\GL(g,\bbZ)$ matrix (see~\eqref{eq:symplectic transformation} for more details), which is not area preserving for $g \geq 3$.
If we restrict the bundle $\calB$ to a level set $\calH^g_0(a)$, the plane $\calB_{\tilde{a}}$ for any $\tilde{a} \in \calH^g_0(a)$ can be identified with the fixed plane $\BPer_{a}[\calB_{a}]$ and the issue is mitigated.
We now ask ``how does $\willmore$ change along a curve in $\calH^g_0(a)$?''

\begin{theorem}
\label{thm:Willmore derivative}
Let $a\in\calH^g_0$ be as in Theorem~\ref{thm:Whitham deformation}.
The derivative $\dot{\willmore}$ along the one-dimensional submanifold $\calH^g_0(a,\tau)$ vanishes at $a \in \calH^g_0$ if and only if $\calB_a$ has a common root.
At $a \in \calH^g_0$ with a common root $\beta$ of $\calB_a$ of first order, the second derivative $\ddot{\willmore}$ vanishes if and only if $\frac{\dd}{\dd\lambda}\frac{b_2}{b_1}$ vanishes at $\beta$.
\end{theorem}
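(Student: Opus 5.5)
The plan is to compute $\dot\willmore$ directly from the Whitham equations~\eqref{eq:whitham ab} along the one-dimensional set $\calH^g_0(a,\tau)$. Along this set we transport the basis $(b_1,b_2)$ of $\calB_a$ by the isoperiodic flow, so the vectors $\BPer(b_1),\BPer(b_2)$ are constant. Hence the denominator $\operatorname{Area}$ in~\eqref{eq:willmore} is constant, and only the numerator $N=b_1(0)b_2'(0)-b_1'(0)b_2(0)$ needs to be differentiated. Definition~\ref{def:willmore} is independent of the basis, so this is legitimate. By the proof of Theorem~\ref{thm:Whitham deformation}, the tangent vectors of $\calH^g_0(a,\tau)$ are exactly the kernel vectors of $\dd\Phi$ with $Q(0)=0$. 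By the reality condition on $\calP^2_\bbR$, these are the ones with $Q=r\lambda$ for some $r\in\bbR$.

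\textbf{Step 1: first derivative.} I expand~\eqref{eq:whitham ab} to first order in $\lambda$ at $\lambda=0$, using $a(0)=1$ and $\dot a(0)=0$. The order-zero term gives $\dot b_k(0)=-\tfrac{\ci}{2}c_k(0)$. The order-one term gives $\dot b_k'(0)=\tfrac12\bigl(\dot a'(0)b_k(0)+\ci c_k'(0)-\ci a'(0)c_k(0)\bigr)$. Substituting these into $\dot N$, the $\dot a'(0)$ terms cancel, and
\[
\dot N=\tfrac{\ci}{2}\bigl[b_1c_2'+b_1'c_2-b_2c_1'-b_2'c_1+a'(b_2c_1-b_1c_2)\bigr](0).
\]
Comparing with the derivative at $0$ of the compatibility equation~\eqref{eq:whitham Q}, and using $Q(0)=0$, this should reduce to $\dot N=-\tfrac{\ci}{2}Q'(0)=-\tfrac{\ci}{2}r$. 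Therefore $\dot\willmore$ is a nonzero multiple of $r$.

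\textbf{Step 2: when the first derivative vanishes.} By Lemma~\ref{lem:whitham kernel}(ii), if $\calB_a$ has no common root then $Q\mapsto Q$ is injective on the kernel, so the tangent line has $r\neq0$ and $\dot\willmore\neq0$. By Lemma~\ref{lem:whitham kernel}(iii), if there is a common root $\beta$ then the tangent line of $\calH^g_0(a,\tau)$ is precisely the line with $Q=0$, so $\dot\willmore=0$. This proves the first claim.

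\textbf{Step 3: second derivative.} Now assume $\beta$ is a common root. Along the flow we have $Q(t)=r(t)\lambda$ with $r(0)=0$ and $\dot N=-\tfrac{\ci}{2}r(t)$. Hence $\ddot\willmore(0)$ is a nonzero multiple of $\dot r(0)$, because the term involving the derivative of the prefactor is multiplied by $r(0)=0$. To compute $\dot r(0)$, I differentiate~\eqref{eq:whitham Q} in $t$ and use $Q(0)=0$ to obtain $b_2\dot c_1+\dot b_2c_1-\dot b_1c_2-b_1\dot c_2=a\dot Q$. Evaluating at $\lambda=\beta$, where $b_k(\beta)=0$ and $a(\beta)\neq0$, gives
\[
a(\beta)\dot Q(\beta)=(\dot b_2c_1-\dot b_1c_2)(\beta).
\]
I then insert $c_k=v\tilde b_k$ with $v(\beta)\neq0$ from Lemma~\ref{lem:whitham kernel}(iii). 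I also insert $2a\dot b_k=\ci\bigl(2\lambda ac_k'-(\lambda a'+a)c_k\bigr)$ at $\beta$, which holds because $b_k(\beta)=0$. The terms in $c_k$ without a derivative cancel, since $\tilde b_1c_2-\tilde b_2c_1=0$. What remains should be
\[
\dot Q(\beta)=\ci\beta\, v(\beta)^2\,(\tilde b_1\tilde b_2'-\tilde b_2\tilde b_1')(\beta),
\]
which is a nonzero multiple of $\tfrac{\dd}{\dd\lambda}(b_2/b_1)(\beta)$. Since $\dot Q=\dot r\lambda$ and $\beta\neq0$, it follows that $\dot r(0)=0$ exactly when $(b_2/b_1)'(\beta)=0$.

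The main obstacle is the bookkeeping in Step 3. The delicate points are checking that $\dot Q$ still has the form $\dot r\lambda$, which holds because $\tau$ is constant so $Q(0)\equiv0$, and tracking the nonvanishing factors $v(\beta)$, $\beta$ and $a(\beta)$ correctly.
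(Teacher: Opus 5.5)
Your proposal is correct. Steps 1 and 2 follow the paper's argument exactly: the area is held fixed, you read off the $\lambda^0,\lambda^1$ coefficients of~\eqref{eq:whitham ab}, derive $\dot\willmore\operatorname{Area}=8\ci\dot N=4Q'(0)$, and finish with Lemma~\ref{lem:whitham kernel}(ii),(iii). The $a'(0)Q(0)$ terms cancel identically, so $Q(0)=0$ is not even needed in Step 1.

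Step 3 starts from the same identity as the paper, the $t$-derivative of~\eqref{eq:whitham Q} evaluated at $\beta$, but then takes a different route.

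\textbf{The paper's route.} It passes to $b_3=b_2-rb_1$ with $r=(b_2/b_1)(\beta)$, which vanishes to order at least two at $\beta$. It then counts orders of vanishing: $c_3$ vanishes to one order less than $b_3$, and $\dot b_3$ to one order less than $c_3$. This gives $\dot Q'(0)=0\iff\dot b_3(\beta)=0\iff\operatorname{ord}_\beta b_3\geq 3\iff(b_2/b_1)'(\beta)=0$.

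\textbf{Your route.} You substitute the Whitham equation at $\beta$ and reduce to a Wronskian. This yields a closed formula: $a(\beta)\dot Q(\beta)=\ci\beta\,(c_1c_2'-c_2c_1')(\beta)=\ci\beta\,c_1(\beta)^2f'(\beta)$ with $f=b_2/b_1$. Two small corrections:
\begin{itemize}
\item Your display omits the factor $a(\beta)^{-1}$. This is harmless since $a(\beta)\neq0$.
\item The terms without derivatives of $c_k$ cancel simply because $c_2c_1-c_1c_2=0$. The relation $\tilde b_1c_2=\tilde b_2c_1$ is what removes the $v'$-terms.
\end{itemize}

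\textbf{What your route buys.} The reality conditions give $c_1(\beta)^2=\beta^{g+1}|c_1(\beta)|^2$ and $a(\beta)=\beta^g|a(\beta)|$. Hence
\[
\ddot\willmore\operatorname{Area}=4|c_1(\beta)|^2|a(\beta)|^{-1}\,\ci\beta f'(\beta),
\]
where $\ci\beta f'(\beta)$ is the angular derivative of the real function $f|_{\bbS^1}$ at $\beta$. So you get more than the vanishing criterion. You also obtain the sign of $\ddot\willmore$, relative to the orientation sign of $\operatorname{Area}$, and so whether the extremum is a maximum or a minimum. This may bear on the paper's hypothesis relating the winding number to whether $\willmore$ increases or decreases. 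The paper's order-counting is less computational and exposes the root structure of $b_3$, $c_3$, $\dot b_3$, but it yields only the equivalence.
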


\begin{proof}
Take the setup of Theorem~\ref{thm:Whitham deformation}.
That is, we choose any basis $(b_1,b_2)$ of $\calB_a$ at $a \in \calH^g_0$.
Then on $\calH^g_0(a)$ we have a frame $(b_1,b_2)$ of $\calB$ with constant periods.
The area $\operatorname{Area}(\BPer_a(b_1),\BPer_a(b_2))$ of the parallelogram is constant for these choices, and we abbreviate it to $\operatorname{Area}$.
Therefore any change of $\willmore$ comes from the numerator.
Since $a(0)=1$ along the curve, $\dot{a}(0)=0$.
Comparing the coefficients of $\lambda^0$ and $\lambda^1$ in~\eqref{eq:whitham ab} gives
\[
\dot{b}_k(0) = -\tfrac{\ci}{2}c_k(0),
\qquad
\dot{b}_k'(0) = \tfrac{\ci}{2}\bigl(c_k'(0) - a'(0)c_k(0)\bigr) + \tfrac{1}{2}\dot{a}'(0)b_k(0),
\]
and the same comparison in~\eqref{eq:whitham Q} gives
\begin{align*}
Q(0) &= b_2(0)c_1(0) - b_1(0)c_2(0), \\
Q'(0) + a'(0)Q(0) &= b_2'(0)c_1(0) + b_2(0)c_1'(0) - b_1'(0)c_2(0) - b_1(0)c_2'(0).
\end{align*}
We differentiate Equation~\eqref{eq:willmore} for $\willmore$ and substitute these:
\begin{equation}
\label{eq:Willmore derivative}
\begin{aligned}
\dot{\willmore} \operatorname{Area}
&= 8\ci\bigl(\dot{b}_1(0)b_2'(0) + b_1(0)\dot{b}_2'(0) - \dot{b}_1'(0)b_2(0) - b_1'(0)\dot{b}_2(0)\bigr) \\
&= 4\bigl(b_2'(0)c_1(0) + b_2(0)c_1'(0) - b_1'(0)c_2(0) - b_1(0)c_2'(0)\bigr) \\
&\qquad - 4a'(0)\bigl(b_2(0)c_1(0) - b_1(0)c_2(0)\bigr) \\
&= 4 Q'(0).
\end{aligned}
\end{equation}
Along $\calH^g_0(a,\tau)$ we have $Q = Q'(0)\lambda$, and by the proof of Theorem~\ref{thm:Whitham deformation}, which rests on Lemma~\ref{lem:whitham kernel}(ii) and~(iii), $Q'(0) = 0$ if and only if $(b_1,b_2)$ has a common root.

Suppose $(b_1,b_2)$ has a simple common root at $\beta$, and assume that $b_1$ has a simple root at $\beta$; if it does not, then $b_2$ does, and the essence of the proof is the same.
Define $b_3 = b_2 - r b_1$ for $r = \left(\frac{b_2}{b_1}\right)(\beta)$, the unique element of $b_2 + b_1 \bbC$ with a root at $\beta$ of order at least two.
Applying the reality conditions shows that in fact $r \in \bbR$.
Let $c_3 = c_2 - rc_1$.
As $Q \equiv 0$, we know $(c_1,c_2,c_3)$ lie in $(b_1,b_2,b_3)\bbR+(\ci\frac{\lambda+\beta}{\lambda-\beta}b_1,\ci\frac{\lambda+\beta}{\lambda-\beta}b_2, \ci\frac{\lambda+\beta}{\lambda-\beta}b_3)\bbR$, and for a nonzero tangent vector $(\dot{a},\dot{b}_1,\dot{b}_2)$ the second real scalar is nonzero.
Hence $c_1(\beta) \neq 0$ and the order of the root of $c_3$ at $\beta$ is one less than the order of the root of $b_3$.
In particular, $c_3$ has at least a simple root at $\beta$.
By taking the same linear combination of~\eqref{eq:whitham ab}, we see that the order of the root of $\dot{b}_3$ at $\beta$ is one less than that of $c_3$.

In~\eqref{eq:whitham Q}, substitute $(b_2,c_2) = (b_3 + rb_1, c_3 + rc_1)$, and since $Q\in\lambda\bbR$ holds on the submanifold $\calH^g_0(a,\tau)$, substitute $Q = Q'(0)\lambda$.
Differentiating along the submanifold and evaluating at $\beta$ gives
\begin{align*}
\dot{b}_3c_1 - \dot{b}_1c_3
+ b_3\dot{c}_1 - b_1\dot{c}_3
&= \dot{a}Q'(0)\lambda + a\dot{Q}'(0)\lambda, \\
\dot{b}_3(\beta)c_1(\beta) - 0
+ 0 - 0
&= 0 + a(\beta)\dot{Q}'(0)\beta.
\end{align*}
This shows that $\dot{Q}'(0) = 0$ if and only if $\dot{b}_3(\beta) = 0$, and therefore by the above consideration of orders if and only if $b_3$ has a root of order at least three.
This is equivalent to the vanishing of the following derivative at $\beta$
\[
\frac{\dd}{\dd\lambda}\frac{b_2}{b_1}
= \frac{\dd}{\dd\lambda}\left(\frac{b_3}{b_1} + r\right)
= \frac{\dd}{\dd\lambda}\frac{b_3}{b_1}.
\]
Along the submanifold the second derivative satisfies $\ddot{\willmore}\operatorname{Area} = 4\dot{Q}'(0)$, completing the proof of the theorem.
\end{proof}

Let us interpret Theorem~\ref{thm:Willmore derivative} in terms of the winding number function~\cite[Section~3]{Carberry2016}.
For an oriented basis $(b_1,b_2)$ of $\calB_a$ define
\begin{equation}
\label{eq:def f}
f=\frac{b_2}{b_1},
\qquad
\tilde{f}=\frac{b_1+\ci b_2}{b_1-\ci b_2}=\frac{1+\ci f}{1-\ci f}.
\end{equation}
Our $f$ is the reciprocal of the function $b_1/b_2$ of~\cite[Section~3]{Carberry2016}; since only its degree as a holomorphic map is used in that paper, this makes no difference.
The above convention has the advantage that $f(0) = b_2(0)/b_1(0) = \tau$.
The function $\tilde{f}$ agrees with that of~\cite{Carberry2016}.
Since $b_1,b_2\in\calP^{g+1}_\bbR$, the function $f$ is real-valued on $\bbS^1$, and so $\tilde{f}$ maps $\bbS^1$ to itself.
The degree of $\tilde{f}\colon\bbS^1\to\bbS^1$ is the \emph{winding number}.
A change of oriented basis of $\calB_a$ changes $f$ by a real Möbius transformation with positive determinant, so the winding number depends only on $a$.
Away from $\calS^g$, the subset where $\calB_a$ has a common root on $\bbS^1$, $f$ depends continuously on $a$, so the winding number is locally constant on $\calH^g_0\setminus\calS^g$.
Theorem~\ref{thm:Willmore derivative} says that $\willmore$ is strictly monotone along $\calH^g_0(a,\tau)$ away from $\calS^g$.
We hypothesize that the winding number is connected with whether $\willmore$ is increasing or decreasing.
At $a\in\calS^g$ with a first-order common root $\beta$ we have $\dot{\willmore}=0$, and $\ddot{\willmore}\neq0$ if and only if $\dd f$ does not vanish at $\beta$, in which case $a$ is a strict local extremum of $\willmore$ along $\calH^g_0(a,\tau)$.
In particular, $\calH^g_0(a,\tau)$ is transverse to $\calS^g$ at such extrema.

\section{Adapted Canonical Bases of Homology}
\label{sec:adapted bases}

Let us motivate this section with the following problem.
Consider one of the $3$-dimensional isoperiodic sets $\calH^g_0(a)$ and another $\tilde{a} \in \calH^g_0$: is there an invariant to say whether $\tilde{a} \in \calH^g_0(a)$?
A first attempt to answer this question would note that $\calH^g_0(a)$ lies in a level set of $\tilde{a} \mapsto \BPer_{\tilde{a}}[\calB_{\tilde{a}}] \in \Gr(2,\bbR^g)$, so the value of the level itself is an invariant.
The slight complication is that this value is only locally well-defined, since $\BPer_a$ requires the choice of a basis of homology.
Recall from Lemma~\ref{lem:differential period} that we use a canonical basis of homology where the $A$-cycles have $\rho_\ast A = - A$ to define this map (and ensure that its image lies in $\bbR^g$).
Any two such bases differ by a symplectic matrix of the form
\begin{equation}
\label{eq:symplectic transformation}
\begin{pmatrix}
(X^T)^{-1} & 0 \\ Z & X
\end{pmatrix}
\quad\text{with}\quad
X \in \GL(g,\bbZ) \text{ and } ZX^T = (ZX^T)^T.
\end{equation}
Explicitly, the new basis of homology has $A$-cycles $A'_i = \sum_j ((X^T)^{-1})_{ij}A_j$ and $B$-cycles $B'_i = \sum_j Z_{ij}A_j + X_{ij} B_j$, and the components of $\BPer_{a}(b) \in \bbR^g$ with respect to the new basis are, up to the factor $-\ci$,
\[
\int_{B'_i} \Theta_b
= \sum_j Z_{ij} \int_{A_j} \Theta_b + \sum_j X_{ij} \int_{B_j} \Theta_b
= \sum_j X_{ij} \int_{B_j} \Theta_b,
\]
since $\APer_{a}(b) = 0$ for $b \in \calB_a$.
In short, the change of basis induces a linear action of $X \in \GL(g,\bbZ)$ on $\bbR^g$, which in turn induces an action on $\Gr(2,\bbR^g)$.
Hence the well-defined invariant is the $\GL(g,\bbZ)$-orbit of $\BPer_a[\calB_a]$ in $\Gr(2,\bbR^g)$.

To understand the values of this invariant, we should quotient $\Gr(2,\bbR^g)$ by this action, but in general the quotient is complicated.%
\footnote{Just consider the analogous action of $\GL(2,\bbZ)$ on $\RP^1$. 
Two lines lie in the same orbit if and only if the continued fractions of their gradients eventually agree~\cite[Theorem~175]{Hardy1975}, so every orbit is countable and dense.}
Therefore we restrict now to the doubly periodic case, to which CMC tori belong.
Precisely, we consider planes $\Pi \in \Gr(2,\bbR^g)$ such that $\Lambda := \Pi \cap \bbZ^g$ is a rank two lattice.
These are precisely the elements of the subset $\Gr(2,\bbQ^g)$.
Choose generators $v_1,v_2$ of $\Lambda$ and, by inductively choosing integer vectors that are primitive in $\bbZ^g/\bbZ\langle v_1,\dots,v_k\rangle$, extend this to a generating set $v_1,\dots,v_g$ of $\bbZ^g$.
Then the matrix $(v_1|\dots|v_g) \in \GL(g,\bbZ)$ transforms the standard generating set of $\bbZ^g$ to this one, and in particular transforms $\bbR^2 \times \{0\} \subset \bbR^g$ to $\Pi$.
This demonstrates that all $a\in \calH^g_0$ in the doubly periodic case have planes $\Pi = \BPer_a[\calB_a]$ that belong to the same $\GL(g,\bbZ)$-orbit in $\Gr(2,\bbR^g)$.
Thus our first attempt to find an invariant cannot distinguish the sets $\calH^g_0(a)$ in the doubly periodic case.

We can do better!
The core idea is to be more selective about the choice of basis of homology.
A common choice in the literature~\cite{Carberry2016a} is the following: if the pairs of roots all lie on distinct rays in the $\lambda$-plane, which is the generic case, then one may lift the line segments between pairs of roots to distinct cycles in homology and complete these to a canonical basis.
This choice is not well-defined if pairs of roots pass by one another, leading to branch cut issues.
That this entanglement preserves the basis of homology modulo $2$ is classical~\cite[Lemma~IIIa.8.12]{Mumford1984}, but this observation has not been used in the spectral-curve literature to our knowledge.
By loosening the choice of basis slightly (compared to the procedure above), one obtains a proper subset of canonical bases that is compatible with the real structure and closed under deformation of the branch points.
Precisely, for each pair of roots of $a$, choose a path $\gamma$ between them in the $\lambda$-plane that avoids the other roots of $a$ and is invariant with respect to $\lambda \mapsto \bar{\lambda}^{-1}$.
Now lift this to a path $\tilde{\gamma}$ on $\Sigma_a$, and $\tilde{\gamma} - \sigma_\ast\tilde{\gamma}$ is an $A$-cycle.
In addition we may assume that the cycle is smooth.
The two possible choices of lift give the same cycle with opposite orientation.
Since $a$ has no roots on $\bbS^1$, each pair consists of roots $\alpha$ and $\bar{\alpha}^{-1}$ exchanged by $\rho$.
Not every choice of such cycles $\{A_1,\dots,A_g\}$ leads to a basis for the $A$-cycles (the subgroup with $\rho_\ast A = - A$), but if it does, then we may complete these $A$-cycles to a canonical basis of homology by choosing $B$-cycles ($A_i \cdot A_j = B_i \cdot B_j = 0$ and $A_i \cdot B_j = \delta_{ij}$).

\begin{definition}
\label{def:adapted basis}
We call a canonical basis that has been constructed in the above manner an \emph{adapted canonical basis of homology}.
\end{definition}

Due to the requirement that $\{A_1,\dots,A_g\}$ should be a basis for the $A$-cycles, we should prove the existence of adapted canonical bases.
To do so, we construct an example with the additional property that the paths $\gamma_i$ are simple and disjoint.
Due to their simplicity, no path $\gamma_i$ disconnects the $\lambda$-plane.
Thus if we choose the paths iteratively $i=1,\dots,g$ we can ensure that the next path $\gamma_i$ is disjoint from the previous ones $\gamma_1,\dots,\gamma_{i-1}$.
To see this gives a generating set of the $A$-cycles, consider a cycle $C_i$ that is the lift of a path between $\lambda=0$ and a root $\alpha_i$ of $a$ that otherwise avoids the roots of $a$ and the paths $\{\gamma_i\}$.
Then the intersection number of $C_i$ with $A_j$ is $\pm 1$ for $i=j$ and zero otherwise.
Firstly this shows that the $\{A_i\}$ span the $A$-cycles as a $\bbQ$-vector space.
Moreover if $\sum q_i A_i$ with $q_i\in\bbQ$ is an $A$-cycle, then taking the intersection with $C_j$ shows that $q_j$ is an integer.
It follows that $\{A_1,\dots,A_g\}$ generate the subgroup of $A$-cycles.

The next task is to determine how two adapted canonical bases are related.
\begin{lemma}
\label{lem:CA adapted basis}
Consider the group homomorphism $\pi_2 : \GL(g,\bbZ) \to \GL(g,\bbF_2)$ defined by taking the entries modulo 2 and let $\Perm(g,\bbF_2)$ be the permutation matrices.
Define
\[
\CA(g) = \pi_2^{-1}[\Perm(g,\bbF_2)].
\]
The name $\CA$ abbreviates ``change of $A$-cycles''.
The change of basis matrices between two adapted canonical bases are exactly those of the form~\eqref{eq:symplectic transformation} with $X\in \CA(g)$.
\end{lemma}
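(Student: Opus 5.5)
The plan is to prove the two inclusions separately. In both directions the $B$-cycles play no role. Once the $A$-cycles $A_1,\dots,A_g$ are fixed as a basis of the $A$-cycles, any two completions to a canonical basis differ by a matrix of the form~\eqref{eq:symplectic transformation} with $X=\id$. Conversely, every such $Z$ with $Z$ symmetric gives another completion. So it suffices to show that the matrices relating the $A$-parts of two adapted bases are exactly $(X^T)^{-1}$ with $X\in\CA(g)$. Since $\pi_2$ is a homomorphism and $\Perm(g,\bbF_2)$ is closed under inverse and transpose, $X\in\CA(g)$ if and only if $(X^T)^{-1}\in\CA(g)$. So we may work with the matrix $M=(X^T)^{-1}$ acting on the $A$-cycles directly.

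\emph{Necessity.} I would use the classical fact~\cite[Lemma~IIIa.8.12]{Mumford1984} on $H_1(\Sigma_a,\bbF_2)$. For two branch points $p,q$ and any path $\gamma$ between them avoiding the other branch points, the class of $\tilde\gamma-\sigma_\ast\tilde\gamma$ modulo $2$ depends only on the unordered pair $\{p,q\}$, and not on $\gamma$. A direct argument goes as follows. Two such paths differ by a loop in the punctured plane. The lift of a loop minus its $\sigma$-image is either null-homologous or twice a cycle, according to whether the loop lifts to a closed curve or not. In either case it vanishes modulo $2$.

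Applied to an adapted basis, this says that $A_i \bmod 2$ is the class $e_{\{\alpha,\bar\alpha^{-1}\}}$ of the $i$-th pair of roots. These $g$ classes are the same for every adapted basis, up to the labelling of the pairs. Hence for two adapted bases we get $A'_i\equiv A_{s(i)}\pmod 2$ for a permutation $s$. The $A_i$ reduce to a basis of the image of the $A$-cycles in $H_1(\Sigma_a,\bbF_2)$; independence follows from the intersection numbers with the cycles $C_j$ constructed above. Therefore $\pi_2(M)$ is the permutation matrix of $s$, and so $X\in\CA(g)$.

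\emph{Sufficiency.} Fix an adapted basis built from paths $\gamma_1,\dots,\gamma_g$. I would show that every $M\in\CA(g)$ is realized by modifying the paths. Relabelling the pairs realizes all of $\Perm$, and reversing a lift realizes sign changes. It then remains to realize the level-$2$ congruence subgroup $\ker\pi_2$. I would quote the standard fact that $\ker\pi_2$ is generated by the diagonal sign matrices together with the squares $E_{ij}^2=\id+2e_{ij}$ of elementary matrices. For $E_{ij}^2$, I would replace $\gamma_i$ by a $\rho$-invariant path obtained as follows: near $\alpha_i$ insert a loop encircling the pair $j$ on the inside, and near $\bar\alpha_i^{-1}$ insert its mirror image under $\lambda\mapsto\bar\lambda^{-1}$.

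I would then check, by computing intersection numbers with the cycles $C_k$, that the new cycle equals $A_i\pm 2A_j$ and that the other $A_k$ are unchanged. If a single loop around the pair gives $\pm A_j$ per side, the two mirrored loops give $\pm2A_j$. If instead the signs cancel, I would take a loop around only $\alpha_j$ on one side, mirrored around $\bar\alpha_j^{-1}$, and recompute. Since the new $A$-cycles are $M$ applied to a basis of the $A$-cycles with $M$ invertible over $\bbZ$, they again form a basis. This holds regardless of whether the new paths are disjoint.

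\emph{Main obstacle.} I expect the hardest step to be this realization of the generators $E_{ij}^2$ by $\rho$-invariant path modifications, including the sign bookkeeping. It rests on the generation statement for the level-$2$ congruence subgroup, which I would cite rather than reprove.
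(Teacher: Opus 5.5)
Your reduction to the $A$-part and your necessity argument are fine; the latter is the paper's argument, phrased in $H_1(\Sigma_a,\bbF_2)$. Your ``direct argument'' is muddled, though. Since the endpoints are branch points, $\tilde\gamma'-\tilde\gamma$ is already a closed cycle $C$, so the two $A$-cycles differ by $C-\sigma_\ast C=2C$; there is no null-homologous case.

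The genuine gap is in the sufficiency step, and it is not a sign issue. If you splice into $\tilde\gamma_i$ a closed loop $\tilde L$ (tail out, loop, tail back on the same sheet), the new cycle is
\[
(\tilde\gamma_i+\tilde L)-\sigma_\ast(\tilde\gamma_i+\tilde L)=A_i+\tilde L-\sigma_\ast\tilde L=A_i+2[\tilde L],
\]
because $\sigma_\ast=-1$ on $H_1(\Sigma_a,\bbZ)$. So every inserted loop is counted twice. A loop around the slit $\gamma_j$ has $[\tilde L]=\varepsilon A_j$, so the insertion near $\alpha_i$ alone already contributes $2\varepsilon A_j$.

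The mirror insertion cannot cancel this. The midpoint of the lifted $\rho$-invariant path lies over $\bbS^1$, where $\rho$ is the identity, so $\rho$ maps the lifted path to its own reverse. Hence the mirror loop has class $-\rho_\ast[\tilde L]=\varepsilon A_j$. Your construction therefore produces $A_i\pm4A_j$, i.e.\ the matrices $I\pm4E_{ij}$. Together with permutations and sign changes, these generate only matrices that reduce modulo $4$ to signed permutation matrices, which excludes $I+2E_{ij}$. So you have not shown that every $X\in\CA(g)$ occurs.

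The remedy is to insert the loop only once, at a point fixed by $\lambda\mapsto\bar\lambda^{-1}$. Follow $\tilde\gamma_i$ to a point over $\bbS^1$, run over an arc of $\bbS^1$ to a point of $A_j$ over $\bbS^1$, traverse $\varepsilon A_j$, and return the same way. The arc is pointwise fixed and the projection of $A_j$ is carried to its reverse, so the new path is still invariant. Now $A_i'=A_i+\varepsilon A_j-\varepsilon\sigma_\ast A_j=A_i+2\varepsilon A_j$; this is the paper's construction.

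Your fallback, with loops around the single branch points $\alpha_j$ and $\bar\alpha_j^{-1}$, can also be made to work. Each such loop switches sheets, and the difference becomes $2Z$, where $Z$ is the lift of a loop winding once around each of $\alpha_j$ and $\bar\alpha_j^{-1}$. But $Z=\pm A_j$ only for suitably chosen tails (for instance, tails running to $\gamma_j$ through $\bbS^1$), and that computation is exactly what your proposal leaves open.
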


Before we give a proof, let us explain the group $\CA(g)$ a little.
Subgroups of integer matrix groups that are preimages of modulo reduction are called congruence subgroups.
The group $\CA(g)$ contains the principal congruence subgroup of level 2, denoted $\Gamma(2) = \ker \pi_2$.
Indeed we have the short exact sequence $1 \to \Gamma(2) \to \CA(g) \to \Perm(g,\bbF_2) \to 1$ and the semidirect product $\CA(g) = \Perm(g,\bbZ) \ltimes \Gamma(2)$, explicitly
\[
\CA(g)
= \left\{ P + 2Q \in \GL(g,\bbZ) \mid P \in \Perm(g,\bbZ), Q \in \Mat(g,\bbZ) \right\}.
\]
This form makes clear that if $X \in \CA(g)$ then so too is $X^T$, so the placement of $X$ and $(X^T)^{-1}$ in~\eqref{eq:symplectic transformation} is only a matter of convenience; both block diagonal entries belong to $\CA(g)$.
Further, just as invertible integer matrices are generated by elementary matrices and diagonal matrices, so too is $\CA(g)$ generated by ``even elementary matrices''.

\begin{lemma}
\label{lem:CA generators}
Let $E_{ij} \in \Mat(g,\bbZ)$ be the elementary matrix with $1$ in the $(i,j)$-entry and otherwise zero. 
$\Gamma(2) \subset \GL(g,\bbZ)$ is generated by the matrices $I + 2E_{ij}$ ($i \neq j$) and the diagonal matrices with entries $\pm 1$.
Thus these matrices together with permutation matrices generate $\CA(g)$.
\end{lemma}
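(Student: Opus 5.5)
The plan is a reduction algorithm in the spirit of Gaussian elimination over $\bbZ$, run so that only even elementary operations appear. Write $\calG$ for the subgroup generated by the matrices $I+2E_{ij}$ ($i\neq j$) and the diagonal matrices $D$ with entries $\pm1$. Every such generator reduces to $I$ modulo $2$, so $\calG\subseteq\Gamma(2)$. It remains to show that every $M\in\Gamma(2)$ lies in $\calG$. The second assertion then follows from the semidirect decomposition $\CA(g)=\Perm(g,\bbZ)\ltimes\Gamma(2)$ stated above: any $X\in\CA(g)$ has $\pi_2(X)=\pi_2(P)$ for a permutation matrix $P$, so $P^{-1}X\in\Gamma(2)$.

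For $M\in\Gamma(2)$ I induct on $g$, the case $g=1$ being $M=\pm1$. The first column $(m_1,\dots,m_g)^T$ has $m_1$ odd and all other $m_i$ even, and its entries have gcd $1$ because $M$ is invertible. Left multiplication by $I+2kE_{ij}=(I+2E_{ij})^k$ adds $2k$ times row $j$ to row $i$. This operation keeps us inside $\Gamma(2)$ and preserves the parity pattern of the column. I will use it to run a Euclidean algorithm on the column with even quotients only. Take an odd entry $p$ and an even entry $q$ with $q\neq 0$. Then $q\mapsto q-2kp$ can be arranged with $|q-2kp|\le|p|$. Similarly $p\mapsto p-2kq$ can be arranged with $|p-2kq|\le|q|$, and here strict inequality holds because the result is odd and $q$ is even. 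So the quantity $\sum_i|m_i|$ strictly decreases, or else some $|q|=|p|$ occurs; in the latter case parity gives $|p|=|q|$ only when $q=0$. Hence the process terminates with every even entry equal to $0$ and $m_1=\pm1$, since the gcd is preserved. Multiplying by a diagonal sign matrix makes $m_1=1$.

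The main obstacle is making this termination argument precise: one has to check that the even-quotient division always strictly decreases $\sum_i|m_i|$ until all even entries vanish. The remedy for the borderline case is the parity observation just given, that an odd and an even number cannot have equal absolute value unless both are zero, which is impossible for an odd number. Note also that only row $1$ is odd, so every reduction step is between row $1$ and some row $i\geq2$, and the operations used are legitimate generators.

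Once the first column is $e_1$, I clear the first row using right multiplication by $I+2kE_{1j}$, which acts as a column operation. The entries $m_{1j}$ for $j\ge2$ are even, so they are killed exactly by choosing $k=-m_{1j}/2$. What remains is $\diag(1,M')$ with $M'\in\Gamma(2)$ of size $g-1$. By induction $M'$ is a product of the generators in size $g-1$, and these embed as generators in size $g$. Therefore $M\in\calG$.
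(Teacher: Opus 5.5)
Your proof is correct and is essentially the paper's argument: an even-quotient Euclidean algorithm on the first column using the row operations $I+2kE_{i1}$ and $I+2kE_{1i}$, where parity guarantees strict decrease when you reduce the entry of larger absolute value, followed by clearing the first row with column operations and inducting on $g$. Normalizing $m_1=1$ and then killing each $m_{1j}$ in one step is only a minor streamlining of the paper's ``repeat with column operations''. One wording fix: since $|p|$ is odd and $|q|$ is even, $|p|=|q|$ can never occur, not merely ``only when $q=0$''.
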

\begin{proof}
The ``even elementary'' matrices $I + 2E_{ij}$ are invertible and belong to $\Gamma(2)$.
It is well known that multiplication with elementary matrices from the left/right corresponds to elementary row/column operations.
Likewise, multiplying from the left by $(I+2E_{ij})^k$ for example adds $2k$ times row $j$ to row $i$, while preserving $\Gamma(2)$.
In this proof we will describe how to reduce a matrix to diagonal form using even elementary row and column operations.

Let $X \in \Gamma(2)$, so its diagonal entries are odd and its off-diagonal entries are even.
Consider the entries $x_{11}$ (odd) and $x_{i1}$ (even) for some $i \geq 2$.
Call the larger in absolute value $x$ and the smaller $y$; there is at least one common element of $\{- |x|+1, - |x|+2, \dots, |x| - 1\}$ (a set with $2|x| - 1$ elements) and $x + 2|y|\bbZ$, since $|y| < |x|$ is strict due to parity.
Thus there is an even elementary row operation that reduces the larger entry in absolute value.
Performing this process iteratively is a variant of the Euclidean algorithm, and must terminate with $x_{i1} = 0$.
We can repeat this for every pair of the first and $i$-th row, so that the first column becomes $(\tilde{x}_{11}, 0, \dots, 0)^T$.
Repeating this with column operations, $X$ is reduced to the block diagonal form $X = \diag(\pm 1, \tilde{X})$ for $\tilde{X} \in \Gamma(2) \subset \GL(g-1,\bbZ)$.
The top left entry must be $\pm 1$ because $X \in \GL(g,\bbZ)$.
By induction we reduce $X$ to a diagonal matrix with entries $\pm 1$.
\end{proof}

\begin{proof}[Proof of Lemma~\ref{lem:CA adapted basis}]
First, two adapted canonical bases are canonical, so the change of basis is represented by a symplectic matrix
\[
\begin{pmatrix}
W & Y \\ Z & X
\end{pmatrix}
\in \Sp(2g, \bbZ).
\]
Next observe that the span of the $A$-cycles is preserved by a change of basis since they span the same subgroup.
Hence $Y=0$, and by the properties of symplectic matrices $W,X \in \GL(g,\bbZ)$ and $W = (X^T)^{-1}$.
We understand $W$ as the change of $A$-cycles. 
One choice of the change of basis is simply to relabel the roots of $a$ in a different order and $W$ is a permutation matrix in $\Perm(g,\bbZ)$.
Let us for the moment fix a labeling of the roots of $a$.

Let $\gamma_i, \gamma'_i$ be two paths between the pair of roots $\alpha_i$ and $\bar{\alpha}_i^{-1}$ in the $\lambda$-plane that are invariant under $\lambda \mapsto \bar{\lambda}^{-1}$.
We lift them to cycles $A_i,A'_i$.
Concatenating $\gamma'_i$ with the reverse of $\gamma_i$ gives a loop in the $\lambda$-plane.
This loop can be lifted to a cycle $C$ in $\Sigma_a$ with $A'_i - A_i = C - \sigma_\ast C = 2C$.
We already know that $(A'_i - A_i) \cdot A_j = 0$ because these are all $A$-cycles.
Therefore $A'_i = A_i + \sum_j 2 (C \cdot B_j) A_j$ and in particular $W \in I + 2\Mat(g,\bbZ)$.

Conversely, given an adapted canonical basis and $W \in \CA(g)$ we can construct cycles $A'_i$ for $i = 1,\ldots,g$ that are the lifts of $\gamma'_i$ and obey $A'_i = \sum_j W_{ij} A_j$.
Because of Lemma~\ref{lem:CA generators}, it suffices to give this construction for known generators of $\CA(g)$.
First, if $W = P$ is a permutation matrix then set $A'_i = \sum_j P_{ij} A_j$.
Second, if $W = \diag(\varepsilon_1,\dots,\varepsilon_g)$ is a diagonal matrix with entries $\pm 1$ then set $A'_i = \varepsilon_i A_i$ by reversing the orientation of the cycles as appropriate.

Therefore it remains to describe how to choose the new cycles in the case that $W$ is an even elementary matrix $I + 2E_{ij}$ or its inverse $I - 2E_{ij}$.
Let the sign be denoted $\varepsilon$.
For $k \neq i$, the cycles are unchanged and we set $A'_k = A_k$.
Let us describe a path $\tilde{\gamma}'$ in $\Sigma_a$ from $\alpha_i$ to $\bar{\alpha}_i^{-1}$.
Begin at $\alpha_i$.
Because $\gamma_i$ connects $\alpha_i$ with $\bar{\alpha}_i^{-1}$ it must cross the unit circle at least once.
Therefore we follow its lift $\tilde{\gamma}_i$ until it is over the unit circle.
Traverse an arc over the unit circle to an intersection point with $A_j$.
Then follow $\varepsilon A_j$ according to the sign in $I + 2\varepsilon E_{ij}$, returning back to the intersection point with the unit circle.
Next retrace the arc over the unit circle.
Finally follow along the remainder of $\tilde{\gamma}_i$ to $\bar{\alpha}_i^{-1}$.
Let the path $\gamma'$ in the $\lambda$-plane be the projection of $\tilde{\gamma}'$.
Since it is composed of $\gamma_i$, $\gamma_j$, and arcs on the unit circle, it is invariant under $\lambda \mapsto \bar{\lambda}^{-1}$.
According to the procedure to construct an adapted canonical basis, we should consider the cycle $\tilde{\gamma}' - \sigma_\ast \tilde{\gamma}'$.
In homology, the pairs of arcs of this cycle that lie over the unit circle cancel out, and 
\[
A'_i 
= \tilde{\gamma}_i + \varepsilon A_j - \sigma_\ast \left(\tilde{\gamma}_i + \varepsilon A_j \right)
= (\tilde{\gamma}_i - \sigma_\ast\tilde{\gamma}_i ) + \varepsilon A_j - \varepsilon \sigma_\ast A_j
= A_i + 2 \varepsilon A_j.
\] 
This shows that $W \in \CA(g)$ exactly describes the possible change of $A$-cycles.
The only condition on the choice of $B$-cycles is that they form a canonical basis, hence any choice of $Z$ such that the block matrix lies in $\Sp(2g,\bbZ)$ is acceptable, which is the symmetric matrix condition in~\eqref{eq:symplectic transformation}.
\end{proof}

Given any $a \in \calH^g_0$ we can compute $\BPer_a$ with respect to an adapted canonical basis.
By the same argument as the introduction of this section, the $\CA(g)$-orbit of $\BPer_a[\calB_a] \in \Gr(2,\bbR^g)$ is then a well-defined invariant of $\calH^g_0(a)$.
Therefore we should try to understand the corresponding quotient space.
The following theorem shows that the $\CA(g)$-orbits of planes $\Pi \in \Gr(2,\bbQ^g)$ are labeled by simple combinatorial data.

\begin{theorem}
\label{thm:CAg orbits of Pi}
Let $\Pi \in \Gr(2, \bbQ^g)$, so that $\Lambda := \Pi \cap \bbZ^g$ is a rank 2 lattice.
The $\CA(g)$-orbits of such planes $\Pi$ are in one-to-one correspondence with nonnegative integer triples $(n_1,n_2,n_3)$ with $n_1 \geq n_2 \geq n_3$, at least two entries nonzero ($n_2 \geq 1$), and $n_1 + n_2 + n_3 \leq g$.
\end{theorem}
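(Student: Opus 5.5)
The approach is to attach to each plane $\Pi$ the reduction of its lattice $\Lambda=\Pi\cap\bbZ^g$ modulo $2$, and then to split $\CA(g)=\Perm(g,\bbZ)\ltimes\Gamma(2)$ into its two parts. The permutations act on the reduction by permuting coordinates, which produces the triple. The congruence subgroup $\Gamma(2)$ acts trivially on the reduction, and the main work is to show that it acts transitively on lattices with a given reduction.

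Since $\Pi$ and $\Lambda$ determine each other ($\Pi=\Lambda\otimes\bbQ$), it suffices to classify saturated rank-two sublattices $\Lambda\subset\bbZ^g$ up to $\CA(g)$. Saturation means that a basis of $\Lambda$ extends to a basis of $\bbZ^g$. Hence $\bar\Lambda:=(\Lambda+2\bbZ^g)/2\bbZ^g$ is a two-dimensional subspace of $\bbF_2^g$.

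\textbf{The invariant.} The subspace $\bar\Lambda$ has exactly three nonzero vectors $u_1,u_2,u_3=u_1+u_2$. At every coordinate $j\in\{1,\dots,g\}$, either all three vanish or exactly one of them vanishes, because their $j$-th entries sum to zero in $\bbF_2$. Let $m_i$ be the number of coordinates at which $u_i$ alone vanishes. The multiset $\{m_1,m_2,m_3\}$ depends only on $\bar\Lambda$, since it is indexed canonically by the nonzero vectors and no basis is chosen. Let $(n_1,n_2,n_3)$ be this multiset sorted in decreasing order. Then:
\begin{enumerate}
\item $n_1+n_2+n_3\le g$, since the three counts are taken over disjoint sets of coordinates.
\item $n_2\ge1$. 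If two counts vanished, say $m_2=m_3=0$, then $u_1$ would be nonzero exactly where $u_2$ and $u_3$ are, forcing $u_2=u_3$, a contradiction.
\end{enumerate}
An element $X=P+2Q\in\CA(g)$ acts on $\bar\Lambda$ through $P$, which permutes coordinates. This leaves the sorted triple unchanged, so the triple is an invariant of the orbit.

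\textbf{Surjectivity.} Given an admissible triple, partition $n_1+n_2+n_3$ of the coordinates into blocks $I_1,I_2,I_3$ of sizes $n_1,n_2,n_3$. Take $v_1$ to be the indicator vector of $I_2\cup I_3$ and $v_2$ the indicator vector of $I_1\cup I_3$, as $0/1$ vectors in $\bbZ^g$. These are linearly independent because $n_1,n_2\ge1$. The lattice they span is saturated, since some coordinate of $I_1$ and some coordinate of $I_2$ give a unimodular $2\times2$ minor. Its reduction has $u_1=\bar v_1$, $u_2=\bar v_2$, $u_3=\bar v_1+\bar v_2$, which vanish alone on $I_1$, $I_2$, $I_3$ respectively, so it realizes the given triple.

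\textbf{Injectivity (the main obstacle).} Suppose $\Lambda$ and $\Lambda'$ have the same triple. After applying a permutation matrix we may assume $\bar\Lambda=\bar\Lambda'$, because permutations act transitively on two-dimensional subspaces of $\bbF_2^g$ with a given triple: match the blocks. It remains to find $X\in\Gamma(2)$ with $X\Lambda=\Lambda'$.
\begin{enumerate}
\item Choose bases $(v_1,v_2)$ of $\Lambda$ and $(w_1,w_2)$ of $\Lambda'$ with $\bar v_k=\bar w_k$. This is possible because $\Lambda'\to\bar\Lambda'$ is surjective, so any basis of $\bar\Lambda'$ lifts to a basis of $\Lambda'$.
\item Extend both to bases of $\bbZ^g$, giving matrices $M=(v_1|v_2|\cdots)$ and $M'=(w_1|w_2|\cdots)$ in $\GL(g,\bbZ)$.
\item Their reductions agree in the first two columns. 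Hence $\bar M^{-1}\bar M'$ lies in the block upper triangular subgroup of $\GL(g,\bbF_2)$ with identity in the top-left $2\times2$ block.
\item Lift this element to an integer matrix $S=\begin{pmatrix}I&T\\0&Y\end{pmatrix}$ with $Y\in\GL(g-2,\bbZ)$. The only nontrivial point is lifting the $\bbF_2$-block to $Y$. This works because $\SL(g-2,\bbZ)\to\SL(g-2,\bbF_2)=\GL(g-2,\bbF_2)$ is surjective, which can also be seen by writing the target as a product of elementary matrices and lifting each factor.
\item Replace $M'$ by $M'S^{-1}$. This keeps the first two columns, so it is still a completed basis of $\Lambda'$, and now $\bar M'=\bar M$.
\end{enumerate}
Then $X=M'M^{-1}$ lies in $\Gamma(2)\subset\CA(g)$ and maps $v_k\mapsto w_k$, hence $\Lambda$ onto $\Lambda'$ and $\Pi$ onto $\Pi'$. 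This completes the correspondence.
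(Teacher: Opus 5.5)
Your proposal is correct and follows essentially the same route as the paper. It uses the same mod-$2$ invariant: your count of coordinates where each nonzero vector of $\bar\Lambda$ alone vanishes is exactly the paper's tally of binary coordinate vectors, phrased without choosing a basis. It uses the same block-indicator example for surjectivity. For injectivity it uses the same lifting argument: match the reductions by a coordinate permutation, then correct a completed change of basis by a lifted block upper-triangular matrix so that it lands in $\Gamma(2)$.

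The one loose spot is your justification in step 1 of injectivity. Surjectivity of $\Lambda'\to\bar\Lambda'$ alone does not make lifts of a basis into a basis; for example, $(3,0),(0,1)$ reduce to a basis of $\bbF_2^2$ but do not span $\bbZ^2$. Cite instead the surjectivity of $\GL(2,\bbZ)\to\GL(2,\bbF_2)$. The paper obtains this from explicit basis changes, and it also follows from your own elementary-matrix lifting in step 4.
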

\begin{proof}
By abuse of notation, we use $\pi_2$ for any reduction modulo $2$, not just for the homomorphism $\pi_2 : \GL(g,\bbZ) \to \GL(g, \bbF_2)$.
Choose an ordered basis $\{v_1,v_2\}$ of $\Lambda$.
Because $\Lambda$ is primitive in $\bbZ^g$ (not contained in any larger lattice of the same rank), the projection $\{\pi_2(v_1), \pi_2(v_2)\}$ is a basis for $\pi_2[\Lambda] \subset \bbF_2^g$, which is a $\dim_{\bbF_2} = 2$ subspace.
For the $i$-th coordinate of $\bbF_2^g$ consider the ``binary coordinate vector'' $V_i = (\pi_2(v_{1,i}), \pi_2(v_{2,i}))$.
It has one of four values $(0,0), (1,0), (0,1), (1,1)$.
For $i=1,\dots,g$, tally how often each of the three nonzero values occurs, and record this as an unordered triple $(n_1,n_2,n_3)$.
Choosing a different basis of $\Lambda$ acts by permutation on its three nonzero elements $\pi_2(v_1),\pi_2(v_2),\pi_2(v_1+v_2) \in \pi_2[\Lambda]$, and so permutes the binary coordinate vectors between the three nonzero values.
The action of $\Gamma(2)$ on $\bbZ^g$ leaves $\pi_2[\Lambda]$ unchanged, while the action of $\Perm(g,\bbZ)$ permutes the coordinate indices $i = 1,\dots,g$.
Thus the tally is invariant under the action of $\CA(g)$ and independent of the choice of basis of $\pi_2[\Lambda]$: it is an invariant of the $\CA(g)$-orbit of $\Lambda$.

To see the conditions in the statement of the theorem, we may take the unordered triple and arrange the elements in descending order for uniqueness.
As we really have a plane $\pi_2[\Lambda]$ (not a line or $\{0\}$), there must be at least two distinct nonzero binary coordinate vectors.
Lastly the condition $n_1+n_2+n_3 \leq g$ is simply that there are $g$ binary coordinate vectors, but some may be zero.

This construction is surjective onto the set of triples, since for a given triple $(n_1,n_2,n_3)$, we may consider the lattice $\Lambda$ spanned by 
\begin{equation}
\label{eq:basis Lambda}
\begin{aligned}
v_1 &= \bigl( 0, \dots, 0, 1, \dots, 1, 1, \dots, 1, 0, \dots, 0 \bigr)^T, \\
v_2 &= \bigl( \underbrace{1, \dots, 1}_{n_1}, \underbrace{0, \dots, 0}_{n_2}, \underbrace{1, \dots, 1}_{n_3}, 0, \dots, 0 \bigr)^T.
\end{aligned}
\end{equation}
(The convention is that $v_i$ has zeros in the $i$-th block.
This permits certain descriptions below to be more succinct.)
The projection is a basis of $\pi_2[\Lambda]$, with $0$ and $1$ now considered as elements of $\bbF_2$.
We tally $n_1$ coordinate vectors $(v_{1,i},v_{2,i})$ equal to $(0,1)$, $n_2$ equal to $(1,0)$, and $n_3$ equal to $(1,1)$.

Finally, we show that each triple $(n_1,n_2,n_3)$ corresponds to a unique orbit, by producing an element of $\CA(g)$ that takes $\Pi$ to the above example.
This is essentially an argument about lifting, which will be used again later.
Choose a basis $w_1,w_2$ for $\Lambda = \Pi \cap \bbZ^g$.
The changes of basis $w_1 \mapsto w_1, w_2 \mapsto w_1+w_2$, etc., show that 
\[
\pi_2 : \GL(\Lambda) \to \GL(\pi_2[\Lambda]) \cong \Sym(3)
\]
is surjective.
By a combination of changing the basis of $\Lambda$ and permuting the coordinates of $\bbF_2^g$, we may assume that $\pi_2(w_i) = \pi_2(v_i)$ for $i=1,2$, where $v_i$ are chosen with the same triple $(n_1,n_2,n_3)$.
Now complete $v_1,v_2$ and $w_1,w_2$ to bases of $\bbZ^g$, and let $X \in \GL(g,\bbZ)$ take one to the other.
Then modulo $2$, with respect to the basis $\{\pi_2(w_j)\}$ of $\bbF_2^g$, we write
\[
\pi_2(X)
= \begin{pmatrix}
I_2 & \tilde{B} \\
0 & \tilde{D}
\end{pmatrix},
\]
for $\tilde{D} \in \GL(g-2,\bbF_2)$.
Choose a lift $D \in \GL(g-2,\bbZ)$ of $\tilde{D}$ and any lift $B$ of $\tilde{B}$, and define
\[
Y = \begin{pmatrix}
I_2 & B \\
0 & D
\end{pmatrix} \in \GL(g,\bbZ)
\]
with respect to the basis $\{w_j\}$, so that $\pi_2(Y) = \pi_2(X)$.
By construction $\pi_2(XY^{-1}) = \pi_2(X)\pi_2(Y)^{-1} = I$, thus $XY^{-1} \in \Gamma(2)$.
The action of $XY^{-1}$ on $\Lambda$, namely $XY^{-1}(w_j) = X(w_j) = v_j$ for $j=1,2$, together with the earlier use of $\Perm(g,\bbZ)$ to permute the coordinates, proves that $\Lambda$ (and hence $\Pi$) is in the same $\CA(g)$-orbit as the standard example.
\end{proof}

There is a simple recursion to count the number of triples.
Let $c(n)$ be the number of integer triples with $n_1 \geq n_2 \geq n_3 \geq 0$, $n_2 \geq 1$ and $n_1 + n_2 + n_3 = n$.
Clearly for $n=2$ the only triple is $(1,1,0)$.
For $n \geq 3$ there are three cases to consider.
If $n_3 = 0$, then there are $\lfloor \frac{n}{2} \rfloor$ triples.
If $n_3 = n_2 = 1$, then the only triple is $(n-2,1,1)$.
If $n_3 \geq 1$ and $n_2 \geq 2$, then for any triple we may subtract one from each entry to obtain a valid triple with the sum reduced by $3$.
There are $c(n-3)$ such triples.
Hence we have the recursion
\[
c(n) 
= \left\lfloor \frac{n}{2} \right\rfloor + 1 + c(n-3)
= n + c(n-6).
\]
The number of $\CA(g)$-orbits of the doubly periodic case is therefore 
\[
\# (\Gr(2,\bbQ^g) / \CA(g) )
= \sum_{n=2}^g c(n)
= \left\lfloor\frac{1}{72}(2g^3 + 21g^2 - 6g)\right\rfloor.
\]
The small values are
\[
\begin{array}{c|ccccccccc}
  g  & 2 & 3 & 4 & 5 & 6 & 7 & 8 & 9 & 10 \\ \hline
  \# & 1 & 3 & 6 & 10 & 16 & 23 & 32 & 43 &56
\end{array}
\]
and further information can be found in~\cite{oeis:A034198}.
This is a marked improvement over the result that $\Gr(2,\bbQ^g)$ is a single $\GL(g,\bbZ)$-orbit.

The above discussion concerns the plane $\Pi=\BPer_a[\calB_a]$, but so far has ignored the complex structure $J$ that this plane also carries.
As in the introduction of this section, let us first work through the case using $\GL(g,\bbZ)$ to build familiarity.
We will again restrict to the doubly periodic case, where the quotient is tractable.
Then there is only one orbit of $\Pi \in \Gr(2,\bbQ^g)$, so fix $\Pi$ without loss of generality.
The complex structures on $\Pi$ are equivalent to the complex lines $\{ w - \ci Jw \} \subset \Pi \otimes \bbC$ that contain no purely real points, see the discussion before Definition~\ref{def:isoperiodic sets}.
As $\Pi \otimes \bbC \cong \bbC^2$, these complex lines are the points of $\CP^1 \setminus \RP^1 = \bbC \setminus \bbR$.
A more concrete way to realize this correspondence is to take an ordered basis $\{w_1,w_2\}$ of $\Lambda = \Pi \cap \bbZ^g$.
Then
\begin{equation}
\label{eq:def tau}
\tau = \frac{\varphi(w_2)}{\varphi(w_1)} \in \bbC\setminus\bbR
\end{equation}
is independent of the choice of isomorphism $\varphi : (\Pi,J) \to \bbC$ of complex vector spaces, since any two isomorphisms differ by a scalar in $\bbC^\times$.

It is important to understand how the stabilizer of $\Pi$ acts on $\Pi$. 
By extending a basis $\{w_1,w_2\}$ of $\Lambda = \Pi \cap \bbZ^g$ to a basis of $\bbZ^g$, an element of $\GL(g,\bbZ)$ that preserves $\Lambda$ has block form
\[
X 
= \begin{pmatrix}
A & B \\ 0 & D
\end{pmatrix}
\quad\text{ with }
A \in \GL(2,\bbZ), D \in \GL(g-2,\bbZ).
\]
In particular, it acts on $\Lambda$ as $A$.
Conversely, given $A \in \GL(2,\bbZ)$ we may set $B = 0, D = I$, which shows that the $\GL(g,\bbZ)$-stabilizer of $\Lambda$ acts on $\Lambda$ through all of $\GL(2,\bbZ)$.
This induces a $\GL(2,\bbZ)$ action on $\tau$ by Möbius transformations, with $\begin{pmatrix}
1 & 0 \\ 0 & -1
\end{pmatrix}$ sending $\tau \mapsto -\tau$ and $-I$ fixing $\tau$.
Therefore 
\[
(\bbC \setminus \bbR) / \GL(2,\bbZ)
= \bbH  / \SL(2,\bbZ)
= \bbH  / \PSL(2,\bbZ),
\]
where $\bbH$ denotes the upper half-plane.
This quotient is familiar to us as the moduli space of rank-2 lattices in $\bbC$ up to homothety.
The connection to the present situation is that the classical problem fixes the complex structure and varies the lattice, whereas we have fixed the lattice and varied the complex structure.
This quotient has a standard fundamental domain
\[
\calF_1 = \left\{ x + \ci y \in \bbH \mid -\tfrac{1}{2} \leq x \leq \tfrac{1}{2}, x^2 + y^2 \geq 1 \right\}
\]
with boundary gluing $(-\tfrac{1}{2},y) \sim (\tfrac{1}{2},y)$ and $e^{\ci\phi} \sim e^{\ci (\pi - \phi)}$.
In summary, the $\GL(2,\bbZ)$-orbit of $\tau$ is constant along $\calH^g_0(a,\tau)$, justifying the notation introduced in Definition~\ref{def:isoperiodic sets}, and the orbits are parametrized by $\calF_1$.

Now that we understand the $\GL(g,\bbZ)$ picture, we can carry through the same idea with an adapted canonical basis and the group $\CA(g)$.
The key is to understand how the $\CA(g)$-stabilizer of $\Lambda$ acts on $\Lambda$.
Interestingly, the answer will depend on the triple $(n_1,n_2,n_3)$.
Let us consider an element $X \in \CA(g)$ that preserves $\Lambda$.
By definition $\pi_2(X) \in \Perm(g,\bbF_2)$ and it preserves $\pi_2[\Lambda]$.
Let us consider which permutations preserve $\pi_2[\Lambda]$.
If we take the standard form of $\Lambda$ from~\eqref{eq:basis Lambda}, then the three nonzero elements of $\pi_2[\Lambda] \subset \bbF_2^g$ are 
\begin{align*}
\pi_2(v_1) &= \bigl( 0, \dots, 0, 1, \dots, 1, 1, \dots, 1, 0, \dots, 0 \bigr)^T, \\
\pi_2(v_2) &= \bigl( 1, \dots, 1, 0, \dots, 0, 1, \dots, 1, 0, \dots, 0 \bigr)^T, \\
\pi_2(v_3) = \pi_2(v_1 + v_2) &= \bigl( \underbrace{1, \dots, 1}_{n_1}, \underbrace{1, \dots, 1}_{n_2}, \underbrace{0, \dots, 0}_{n_3}, 0, \dots, 0 \bigr)^T.
\end{align*}
The group $\GL(\pi_2[\Lambda])$ is exactly the symmetric group $\Sym(3)$ of these three nonzero elements.
It is only possible to permute these elements by the action of $\Perm(g,\bbF_2)$, that is, a permutation of the coordinates, if some of the $n_i$ are equal to one another.
For example, if $n_1 = n_2$ then there is a coordinate permutation of $\bbF_2^g$ that exchanges $\pi_2(v_1)$ and $\pi_2(v_2)$ while fixing $\pi_2(v_3)$.
The general rule is that if $n_i = n_j$ then we can swap $v_i$ and $v_j$ while fixing the third element.
Define $H$ as the subgroup of $\GL(\pi_2[\Lambda])$ induced by the action of $\Perm(g,\bbF_2)$.
By our reasoning, if $n_1=n_2=n_3$ then $H = \GL(\pi_2[\Lambda])$; if all are distinct then $H = \{\id\}$; if $n_i = n_j \neq n_k$ then $H = \{\id, (v_i, v_j)\} \cong \Sym(2)$.
These are the only possibilities.

\begin{lemma}
\label{lem:CA stabilizer}
The image of the $\CA(g)$-stabilizer of $\Lambda$ inside $\GL(\Lambda)$ is $\pi_2^{-1}[H]$.
\end{lemma}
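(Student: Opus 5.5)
We prove the two inclusions separately. Here $\pi_2 : \GL(\Lambda) \to \GL(\pi_2[\Lambda]) \cong \Sym(3)$ is reduction modulo $2$, which is surjective by the argument in the proof of Theorem~\ref{thm:CAg orbits of Pi}. Throughout we work in the standard form~\eqref{eq:basis Lambda}. Since $\CA(g)$-conjugation only conjugates stabilizers and their images, this is no loss of generality.

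\emph{The image lies in $\pi_2^{-1}[H]$.} Let $X \in \CA(g)$ preserve $\Lambda$, and write $X = P + 2Q$ with $P$ a permutation matrix. Its restriction $X|_\Lambda \in \GL(\Lambda)$ reduces modulo $2$ to the action of $\pi_2(X) = \pi_2(P)$ on $\pi_2[\Lambda]$. This action is a coordinate permutation of $\bbF_2^g$ that preserves $\pi_2[\Lambda]$. By definition of $H$, the induced element of $\GL(\pi_2[\Lambda])$ lies in $H$. Hence $\pi_2(X|_\Lambda) \in H$.

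\emph{Every element of $\pi_2^{-1}[H]$ is attained.} Let $A \in \GL(\Lambda)$ with $\pi_2(A) = h \in H$. Choose a coordinate permutation $P \in \Perm(g,\bbZ)$ inducing $h$ on $\pi_2[\Lambda]$. In the standard form, $P$ swaps the coordinate blocks of equal sizes $n_i = n_j$, so $P$ genuinely preserves $\Lambda$ itself: it maps $v_1, v_2, v_1+v_2$ to one another up to the appropriate swap. For instance, swapping blocks $1$ and $2$ exchanges $v_1$ and $v_2$. Let $A_P = P|_\Lambda$. Then $A' = A A_P^{-1} \in \GL(\Lambda)$ reduces to the identity modulo $2$. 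It therefore suffices to realize every $A' \in \ker(\GL(\Lambda)\to\GL(\pi_2[\Lambda]))$ by an element of $\Gamma(2)$ preserving $\Lambda$.

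For this lifting step, extend $v_1, v_2$ to a basis $v_1, \dots, v_g$ of $\bbZ^g$, as in the introduction of the section. In this basis define
\[
Y = \begin{pmatrix} A' & 0 \\ 0 & I \end{pmatrix}.
\]
Then $Y$ preserves $\Lambda$, acts on it as $A'$, and satisfies $\pi_2(Y) = I$. Reduction modulo $2$ is a coordinate-free notion: $Y \equiv I$ in one $\bbZ$-basis if and only if $Y \equiv I$ in the standard basis, because conjugation by an element of $\GL(g,\bbZ)$ preserves $\ker \pi_2$. Thus $Y \in \Gamma(2) \subset \CA(g)$. Finally $YP$ lies in $\CA(g)$, stabilizes $\Lambda$, and restricts to $A' A_P = A$.

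\emph{Main obstacle.} The delicate point is checking that the permutation $P$ realizing $h \in H$ can be chosen to preserve $\Lambda$ exactly, and not merely $\pi_2[\Lambda]$. The standard form makes this explicit, since the block swaps permute $\{v_1, v_2, v_1+v_2\}$ as integer vectors. Without the standard form, one would instead need to correct $P$ by an element of $\Gamma(2)$, using the same lifting argument as at the end of the proof of Theorem~\ref{thm:CAg orbits of Pi}.
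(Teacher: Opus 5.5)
Your reduction to the standard form, your first inclusion, and your $\Gamma(2)$-correction by $Y=\diag(A',I)$ are all sound. The step meant to resolve your ``main obstacle'' is false, however. In the standard form~\eqref{eq:basis Lambda} the integer vector $v_1+v_2$ has entries $1$ on the first two blocks and $2$ on the third block. So the block swaps do not permute $\{v_1,v_2,v_1+v_2\}$ unless they exchange blocks $1$ and $2$. Concretely, suppose $n_2=n_3$, which covers the cases $n_1>n_2=n_3$ and $n_1=n_2=n_3$. Swapping blocks $2$ and $3$ sends $v_2$ to the vector with entries $1$ on blocks $1,2$ and $0$ elsewhere. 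Writing this as $xv_1+yv_2$ would force $y=1$, $x=1$ and $x+y=0$, so this permutation does not even preserve $\Pi$. In fact no permutation matrix can realize such an $h$. When $n_3\geq 1$, the only nonzero vectors of $\Lambda$ with entries in $\{0,1\}$ are $v_1$ and $v_2$. Hence any permutation matrix preserving $\Lambda$ permutes $\{v_1,v_2\}$ and therefore fixes $\pi_2(v_1+v_2)$. Your argument thus only covers the cases where $H$ is trivial or $H=\{\id,(v_1,v_2)\}$.

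The strategy can be repaired in either of two ways.
\begin{itemize}
\item Allow signed permutation matrices, which lie in $\CA(g)$ since $-1\equiv 1 \bmod 2$. For $n_2=n_3$, swap blocks $2$ and $3$ and negate the coordinates of block $1$. This fixes $v_1$ and sends $v_2$ to $v_1-v_2$, so it preserves $\Lambda$ and induces the transposition of $\pi_2(v_2)$ and $\pi_2(v_1+v_2)$. The $\CA(g)$-stabilizer of $\Lambda$ is a group, and $H$ is generated by the transpositions you can now realize. So every $h\in H$ is induced by some $\Lambda$-preserving element of $\CA(g)$, and your block-diagonal $\Gamma(2)$ correction finishes the proof.
\item Follow the paper, which never requires $P$ to preserve $\Lambda$. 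It only uses that $\pi_2(P)$ agrees with $\pi_2(A)$ on $\pi_2[\Lambda]$. It then lifts directly to $X\in\GL(g,\bbZ)$ with $X|_\Lambda=A$ and $\pi_2(X)=\pi_2(P)$, by the block-triangular lifting argument from the proof of Theorem~\ref{thm:CAg orbits of Pi}. The cost is using that $\GL(g-2,\bbZ)\to\GL(g-2,\bbF_2)$ is surjective.
\end{itemize}
Your fallback remark points toward the second route, but you dismissed it as unnecessary in the standard form, which is exactly where it is needed.
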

\begin{proof}
We have argued above that if $X \in \CA(g)$ stabilizes $\Lambda$ then $\pi_2(X)|_{\Lambda} \in H$.
It remains to prove the converse.
Suppose $A \in \pi_2^{-1}[H] \subset \GL(\Lambda)$ and let $h = \pi_2(A) \in \GL(\pi_2[\Lambda])$ and let $P \in \Perm(g, \bbF_2)$ be the corresponding permutation matrix.
Then we may find an element $X \in \GL(g,\bbZ)$ with $X|_{\Lambda} = A$ and $\pi_2(X) = P$, by the lifting argument contained in the proof of Theorem~\ref{thm:CAg orbits of Pi}.
Since $P$ is a permutation matrix, we conclude that $X \in \CA(g)$ and thus $A$ is in the image of the $\CA(g)$-stabilizer of $\Lambda$.
\end{proof}

To finish the argument, we examine the quotient of $\bbC\setminus\bbR$ under the action of the $\CA(g)$-stabilizer of $\Lambda$.
If $n_1=n_2=n_3$, then $H$ is all of $\GL(\pi_2[\Lambda])$ and lying in its preimage is no condition at all.
Hence the stabilizer acts by the full group $\GL(\Lambda)$ and the quotient space is $\calF_1$, as above.
At the other extreme, if all $n_i$ are distinct then the stabilizer acts by the principal congruence subgroup $\Gamma(2)$.
In this case, to compute $\tau$ in~\eqref{eq:def tau} we choose the basis $\{v_1,v_2\}$ from~\eqref{eq:basis Lambda}.
Then, after applying $\Gamma(2)$ there is a unique point in a fundamental domain
\[
\calF_6 
:= \{ x + \ci y \in \bbH \mid -1 \leq x \leq 1, (x-\tfrac{1}{2})^2 + y^2 \geq \tfrac{1}{4}, (x+\tfrac{1}{2})^2 + y^2 \geq \tfrac{1}{4}\}
\]
with boundary gluing $-1 + \ci y \sim 1 + \ci y$ and $-\frac{1}{2} + \frac{1}{2}e^{\ci \phi} \sim \frac{1}{2} + \frac{1}{2}e^{\ci (\pi - \phi)}$, as described in~\cite[Fig.~7]{Farkas2001}; see Figure~\ref{fig:F6 and F3}.

\begin{figure}
\centering
\begin{subfigure}{0.48\textwidth}
\centering
\begin{tikzpicture}[scale=2.2]
\fill[gray!15]
  (-1,0) arc[start angle=180, end angle=0, radius=0.5]
         arc[start angle=180, end angle=0, radius=0.5]
  -- (1,1.7) -- (-1,1.7) -- cycle;

\draw[gray] (-1.3,0) -- (1.3,0);

\draw[gray, dashed] (1,0) arc[start angle=0, end angle=180, radius=1];

\draw[thick, mid arrow] (-1,0) -- (-1,1.5);
\draw[thick, mid arrow] ( 1,0) -- ( 1,1.5);
\draw[thick, dotted] (-1,1.5) -- (-1,1.7);
\draw[thick, dotted] ( 1,1.5) -- ( 1,1.7);

\draw[thick, mid arrow=0.5, mid arrow=0.58]
  (-1,0) arc[start angle=180, end angle=0, radius=0.5];
\draw[thick, mid arrow=0.5, mid arrow=0.58]
  ( 1,0) arc[start angle=0, end angle=180, radius=0.5];

\foreach \x in {-1,0,1} {
  \filldraw[fill=white] (\x,0) circle (0.025);
}
\node[below] at (-1,0) {$-1$};
\node[below] at ( 0,0) {$0$};
\node[below] at ( 1,0) {$1$};
\fill (0,1) circle (0.02) node[above right] {$\ci$};
\node at (0,1.35) {$\calF_6$};
\end{tikzpicture}
\end{subfigure}
\hfill
\begin{subfigure}{0.48\textwidth}
\centering
\begin{tikzpicture}[scale=2.2]
\fill[gray!15]
  (1,0) arc[start angle=0, end angle=180, radius=1]
  -- (-1,1.7) -- (1,1.7) -- cycle;

\draw[gray] (-1.3,0) -- (1.3,0);

\draw[thick, mid arrow] (-1,0) -- (-1,1.5);
\draw[thick, mid arrow] ( 1,0) -- ( 1,1.5);
\draw[thick, dotted] (-1,1.5) -- (-1,1.7);
\draw[thick, dotted] ( 1,1.5) -- ( 1,1.7);

\draw[thick, mid arrow=0.5, mid arrow=0.58]
  ( 1,0) arc[start angle=0, end angle=90, radius=1];
\draw[thick, mid arrow=0.5, mid arrow=0.58]
  (-1,0) arc[start angle=180, end angle=90, radius=1];

\foreach \x in {-1,1} {
  \filldraw[fill=white] (\x,0) circle (0.025);
}
\node[below] at (-1,0) {$-1$};
\node[below] at ( 0,0) {$0$};
\node[below] at ( 1,0) {$1$};
\fill (0,1) circle (0.02) node[above] {$\ci$};
\node at (0,1.35) {$\calF_3$};
\end{tikzpicture}
\end{subfigure}
\caption{
The fundamental domains $\calF_6$ and $\calF_3$.
Boundary segments with matching arrows are glued.
The dashed unit circle in $\calF_6$ divides it into two halves, which are exchanged by $\tau \mapsto -\tau^{-1}$.
}
\label{fig:F6 and F3}
\end{figure}

In the final case, where exactly two of $(n_1,n_2,n_3)$ are equal, say $n_i=n_j$, we should choose a basis $\{w_1,w_2\}$ so that $w_1 \leftrightarrow w_2$ belongs to the stabilizer.
This is the case if $\pi_2(w_1) = \pi_2(v_i)$ and $\pi_2(w_2) = \pi_2(v_j)$, for the standard basis~\eqref{eq:basis Lambda}.
Then $H$ is the permutation group of the basis, and hence $\pi_2^{-1}[H]$ is isomorphic to $\CA(2)$.
In addition to the action of $\Gamma(2)$ on $\tau$, we have the swap of basis vectors, $\tau \mapsto \tau^{-1}$.
We can compose this with negation to stay in the upper half-plane; the composition gives $\tau \mapsto - \tau^{-1}$.
This is a bijection between the parts of $\calF_6$ inside and outside the unit circle.
Therefore we have a fundamental domain
\[
\calF_3 
:= \{ x + \ci y \in \bbH \mid -1 \leq x \leq 1, x^2 + y^2 \geq 1\}
\]
with boundary gluing $-1 + \ci y \sim 1 + \ci y$ and $e^{\ci\phi} \sim e^{\ci (\pi - \phi)}$; see Figure~\ref{fig:F6 and F3}.
The fundamental domains $\calF_3$ and $\calF_6$ fit in the schema described in~\cite[p.~156]{Newman1972}.
We summarize the discussion in the following theorem.
\begin{theorem}
\label{thm:CAg orbits of τ}
Fix $\Pi \in \Gr(2, \bbQ^g)$ with invariant $(n_1,n_2,n_3)$ as per Theorem~\ref{thm:CAg orbits of Pi}.
A complex structure on $\Pi$ defines an invariant $\tau$ by~\eqref{eq:def tau}.
The complex structures on $\Pi$ up to the action of the $\CA(g)$-stabilizer are in one-to-one correspondence with 
\[
\tau \in \begin{cases}
\calF_1 &\text{if } n_1=n_2=n_3, \\
\calF_3 &\text{if } n_1=n_2> n_3 \text{ or } n_1 > n_2=n_3, \\
\calF_6 &\text{if }n_1>n_2>n_3.
\end{cases}
\]
\end{theorem}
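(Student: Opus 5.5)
The plan is to reduce the statement to the classification of $\bbC\setminus\bbR$ modulo the image of the $\CA(g)$-stabilizer of $\Lambda$ in $\GL(\Lambda)\cong\GL(2,\bbZ)$. That image was identified in Lemma~\ref{lem:CA stabilizer}, and we then read off fundamental domains for the three possible images.

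First, by Theorem~\ref{thm:CAg orbits of Pi} we may move $\Pi$ by an element of $\CA(g)$ to the standard plane spanned by~\eqref{eq:basis Lambda}. Transporting a complex structure along this element does not change its class, so it suffices to classify complex structures on the standard $\Pi$ up to its $\CA(g)$-stabilizer $G$. An element $X\in G$ pushes $J$ forward to $XJX^{-1}$. In terms of~\eqref{eq:def tau}, with $\varphi$ replaced by $\varphi\circ X^{-1}$, the value $\tau$ computed in the fixed basis $(w_1,w_2)$ changes by the Möbius action of the matrix $A=X|_\Lambda$ (or its inverse) on $\tau$. Two complex structures are therefore $G$-equivalent if and only if their $\tau$'s lie in the same orbit of the image of $G$ in $\GL(2,\bbZ)$. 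By Lemma~\ref{lem:CA stabilizer} this image is $\pi_2^{-1}[H]$. The map $J\mapsto\tau$ is a bijection from complex structures onto $\bbC\setminus\bbR$, by the discussion before Definition~\ref{def:isoperiodic sets}. Finally, the elements of $\GL(2,\bbZ)$ of determinant $-1$ that appear, such as $\diag(1,-1)\in\Gamma(2)$, identify $\tau$ with $-\bar\tau$ under the conjugated action. So we may pass to $\bbH$ modulo the determinant-one part, acting projectively.

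Next we treat the three cases of $H$ found before Lemma~\ref{lem:CA stabilizer}.
\begin{itemize}
\item If $n_1=n_2=n_3$, then $\pi_2^{-1}[H]=\GL(2,\bbZ)$, and the quotient is $\bbH/\PSL(2,\bbZ)$ with fundamental domain $\calF_1$.
\item If $n_1>n_2>n_3$, then the image is $\Gamma(2)$ in the basis $(v_1,v_2)$. Its determinant-one part modulo $\pm I$ is the level-two group, which is free on $\tau\mapsto\tau+2$ and $\tau\mapsto\tau/(2\tau+1)$. The side pairings of $\calF_6$ are exactly these generators, so Poincaré's polygon theorem (or the reference~\cite{Farkas2001}) shows that $\calF_6$ is a fundamental domain. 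It has index $6$ in $\PSL(2,\bbZ)$, consistent with its area being six times that of $\calF_1$.
\item If exactly two $n_i$ agree, we choose the basis $(w_1,w_2)$ adapted to the swap as in the text. The image is then $\CA(2)$, generated by $\Gamma(2)$ and the swap. Composing the swap with $\diag(1,-1)$ gives $\tau\mapsto-\tau^{-1}$, which lies in $\SL(2,\bbZ)$ and normalizes $\Gamma(2)$. The resulting group has index $2$ over $\Gamma(2)$ projectively, so index $3$ in $\PSL(2,\bbZ)$. The involution $\tau\mapsto-\tau^{-1}$ exchanges the two halves of $\calF_6$ cut by the unit circle. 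Replacing the inner half by its image gives $\calF_3$, with the stated gluings: the vertical sides are paired by $\tau\mapsto\tau+2$, and the arcs $e^{\ci\phi}\sim e^{\ci(\pi-\phi)}$ are paired by $-\tau^{-1}$.
\end{itemize}

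The main obstacle is the careful bookkeeping in the first step. We must check that the determinant-$-1$ elements in each image group act on $\bbC\setminus\bbR$ exactly so as to reduce the quotient to $\bbH$ modulo the orientation-preserving subgroup, without introducing extra identifications inside $\bbH$. For $\Gamma(2)$ and $\CA(2)$ this requires checking that $\diag(1,-1)$, or its product with the swap, lies in the group, so that every orbit meets $\bbH$. The quotient of $\bbC\setminus\bbR$ is then $\bbH$ modulo the determinant-one part. Verifying the fundamental domains themselves is standard once the side pairings are matched with generators; as a consistency check, the hyperbolic areas $\pi/3$, $\pi$ and $2\pi$ of $\calF_1$, $\calF_3$ and $\calF_6$ are in ratio $1:3:6$, matching the indices.
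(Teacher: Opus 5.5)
Your proposal is correct and follows the paper's route: reduce to the Möbius action on $\tau$ of the image $\pi_2^{-1}[H]$ from Lemma~\ref{lem:CA stabilizer}, obtain $\calF_1$ in the first case and $\calF_6$ for $\Gamma(2)$, and obtain $\calF_3$ by adjoining $\tau\mapsto-\tau^{-1}$, which exchanges the two halves of $\calF_6$. One slip needs fixing: $\diag(1,-1)$ sends $\tau\mapsto-\tau$ (into the lower half-plane), not to $-\bar\tau$, and this is exactly why the determinant $-1$ elements add no identifications inside $\bbH$; an action by $\tau\mapsto-\bar\tau$ would instead halve each fundamental domain.
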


\begin{example}
For genus $g=2$, there is only one possible triple, namely $(1,1,0)$.
The corresponding moduli space for $\tau$ is $\calF_3$.
For genus $g=3$, there are three possible triples, namely $(2,1,0)$, $(1,1,1)$ and $(1,1,0)$.
The corresponding moduli spaces for $\tau$ are $\calF_6$, $\calF_1$, and $\calF_3$ respectively.
\end{example}

\bibliographystyle{alpha}
\bibliography{cmc_moduli_gg_g2.bib}

@article{Bobenko1991,
  title = {All constant mean curvature tori in $\mathbf{R}^3,\;{S}^3,\;{H}^3$ in terms of theta-functions},
  author = {Bobenko, Alexander I.},
  year = 1991,
  journal = {Mathematische Annalen},
  volume = {290},
  number = {2},
  pages = {209-245},
  doi = {10.1007/BF01459243}
}

@inproceedings{Carberry2013a,
  title = {Harmonic Maps and Integrable Systems},
  booktitle = {Geometry and topology down under},
  author = {Carberry, Emma},
  year = 2013,
  series = {Contemporary Mathematics},
  volume = {597},
  pages = {139-163},
  publisher = {American Mathematical Society},
  address = {Providence, RI},
  doi = {10.1090/conm/597/11770},
  url = {http://www.ams.org/conm/597},
  urldate = {2019-07-14},
  isbn = {978-0-8218-8480-5 978-1-4704-1025-4},
  langid = {english}
}

@article{Carberry2016,
  title = {The prevalence of tori amongst constant mean curvature planes in $\mathbb{R}^3$},
  author = {Carberry, Emma and Schmidt, Martin Ulrich},
  year = {2016},
  month = aug,
  journal = {Journal of Geometry and Physics},
  volume = {106},
  pages = {352-366},
  issn = {03930440},
  doi = {10.1016/j.geomphys.2016.04.010},
  url = {http://linkinghub.elsevier.com/retrieve/pii/S0393044016300961},
  urldate = {2018-01-10},
  langid = {english}
}

@article{Carberry2016a,
  title = {The closure of spectral data for constant mean curvature tori in ${S}^3$},
  author = {Carberry, Emma and Schmidt, Martin Ulrich},
  year = {2016},
  month = jan,
  journal = {Journal für die Reine und Angewandte Mathematik (Crelle's Journal)},
  volume = {2016},
  number = {721},
  pages = {149-166},
  issn = {0075-4102, 1435-5345},
  doi = {10.1515/crelle-2014-0068},
  url = {https://www.degruyter.com/view/j/crelle.2016.2016.issue-721/crelle-2014-0068/crelle-2014-0068.xml},
  urldate = {2018-01-09}
}

@article{Carberry2019,
  title = {Whitham deformations and the space of harmonic tori in $\mathbb{S}^3$},
  author = {Carberry, Emma and Ogilvie, Ross},
  year = 2019,
  month = jun,
  journal = {Journal of the London Mathematical Society},
  volume = {99},
  number = {3},
  pages = {945-964},
  issn = {0024-6107, 1469-7750},
  doi = {10.1112/jlms.12191},
  url = {https://onlinelibrary.wiley.com/doi/abs/10.1112/jlms.12191},
  urldate = {2019-07-14},
  langid = {english}
}

@article{Carberry2020,
  title = {The Space of Equivariant Harmonic Tori in the 3-Sphere},
  author = {Carberry, Emma and Ogilvie, Ross},
  year = {2020},
  journal = {Journal of Geometry and Physics},
  volume = {157},
  pages = {103808},
  issn = {0393-0440},
  doi = {10.1016/j.geomphys.2020.103808},
  url = {https://www.sciencedirect.com/science/article/pii/S0393044020301509}
}

@misc{Carberry2023arxiv,
  title = {The Space of Genus Two Spectral Curves of Constant Mean Curvature Tori in $\mathbb{R}^3$},
  author = {Carberry, Emma and Kilian, Martin and Klein, Sebastian and Schmidt, Martin Ulrich},
  year = 2023,
  month = mar,
  number = {arXiv:2110.00436},
  eprint = {2110.00436},
  primaryclass = {math},
  publisher = {arXiv},
  doi = {10.48550/arXiv.2110.00436},
  url = {http://arxiv.org/abs/2110.00436},
  urldate = {2025-05-28},
  archiveprefix = {arXiv},
  note = {arXiv:2110.00436},
}

@misc{Carberry2025arxiv,
  title = {Universal Deformations of a Curve and a Differential},
  author = {Carberry, Emma and Schmidt, Martin Ulrich},
  year = 2025,
  month = nov,
  number = {arXiv:2211.11442},
  eprint = {2211.11442},
  primaryclass = {math.AG},
  publisher = {arXiv},
  doi = {10.48550/arXiv.2211.11442},
  url = {http://arxiv.org/abs/2211.11442},
  urldate = {2026-10-12},
  archiveprefix = {arXiv},
  note = {arXiv:2211.11442},
}

@incollection{Ercolani1993,
  title = {Hyperelliptic Curves That Generate Constant Mean Curvature Tori in {{R}}{$^{3}$}},
  booktitle = {Integrable Systems: The {{Verdier}} Memorial Conference. {{Actes}} Du Colloque International de {{Luminy}}, {{France}}, {{July}} 1-5, 1991},
  author = {Ercolani, N. M. and Kn{\"o}rrer, H. and Trubowitz, E.},
  year = 1993,
  pages = {81--114},
  publisher = {Birkh\"auser},
  address = {Boston, MA},
  isbn = {0-8176-3653-6},
  langid = {english},
  zbmath = {732107},
  zmnumber = {0867.35082}
}

@book{Farkas2001,
  title = {Theta {{Constants}}, {{Riemann Surfaces}}, and the {{Modular Group}}: {{An Introduction}} with {{Applications}} to {{Uniformization Theorems}}, {{Partition Identities}}, and {{Combinatorial Number Theory}}},
  shorttitle = {Theta {{Constants}}, {{Riemann Surfaces}}, and the {{Modular Group}}},
  author = {Farkas, Hershel M. and Kra, Irwin},
  year = 2001,
  series = {Graduate {{Studies}} in {{Mathematics}}},
  number = {37},
  publisher = {American Mathematical Society},
  address = {Providence, RI},
  isbn = {978-0-8218-1392-8},
  lccn = {QA345 .F37 2001}
}

@book{Griffiths1994,
  title = {Principles of Algebraic Geometry},
  author = {Griffiths, Phillip and Harris, Joe},
  year = 1994,
  publisher = {J. Wiley \& Sons},
  address = {New York, NY},
  isbn = {978-0-471-05059-9},
  langid = {english}
}

@article{Grinevich1995,
  title = {Period Preserving Nonisospectral Flows and the Moduli Space of Periodic Solutions of Soliton Equations},
  author = {Grinevich, P. G. and Schmidt, M. U.},
  year = 1995,
  month = oct,
  journal = {Physica D: Nonlinear Phenomena},
  volume = {87},
  number = {1-4},
  pages = {73--98},
  issn = {01672789},
  doi = {10.1016/0167-2789(95)00139-U},
  url = {https://linkinghub.elsevier.com/retrieve/pii/016727899500139U},
  urldate = {2022-06-27},
  langid = {english}
}

@article{Gross1981,
  title = {Real Algebraic Curves},
  author = {Gross, Benedict H. and Harris, Joe},
  year = 1981,
  journal = {Annales scientifiques de l'\'Ecole normale sup\'erieure},
  volume = {14},
  number = {2},
  pages = {157--182},
  issn = {0012-9593, 1873-2151},
  doi = {10.24033/asens.1401},
  url = {http://www.numdam.org/item?id=ASENS_1981_4_14_2_157_0},
  urldate = {2025-06-01}
}

@book{Hardy1975,
  title = {An Introduction to the Theory of Numbers},
  author = {Hardy, Godfrey H. and Wright, Edward M.},
  year = 1975,
  series = {Oxford Mathematics},
  edition = {Fourth},
  publisher = {Oxford University Press},
  address = {Oxford},
  doi = {10.1093/oso/9780199219858.001.0001},
  isbn = {978-1-383-03595-7},
  langid = {english}
}

@article{Hauswirth2016,
  title = {Mean-Convex {{Alexandrov}} Embedded Constant Mean Curvature Tori in the 3-Sphere},
  author = {Hauswirth, L. and Kilian, M. and Schmidt, M. U.},
  year = 2016,
  month = mar,
  journal = {Proceedings of the London Mathematical Society},
  volume = {112},
  number = {3},
  pages = {588--622},
  issn = {0024-6115, 1460-244X},
  doi = {10.1112/plms/pdw002},
  url = {http://doi.wiley.com/10.1112/plms/pdw002},
  urldate = {2019-07-15},
  langid = {english}
}

@misc{Hauswirth2017arxiv,
  title = {Singularities of {{Whitham}} Flows for Hyperelliptic Spectral Curves},
  author = {Hauswirth, Laurent and Kilian, Martin and Schmidt, Martin U.},
  year = 2017,
  month = sep,
  eprint = {1709.02330},
  publisher = {arXiv},
  url = {http://arxiv.org/abs/1709.02330},
  urldate = {2019-06-24},
  archiveprefix = {arXiv},
  note = {arXiv:1709.02330},
}

@incollection{Heller2018a,
  title = {Towards a Constrained {{Willmore}} Conjecture},
  booktitle = {Willmore Energy and {{Willmore}} Conjecture},
  author = {Heller, Lynn and Pedit, Franz},
  year = 2018,
  pages = {119--138},
  publisher = {CRC Press},
  address = {Boca Raton, FL},
  isbn = {978-1-4987-4463-8 978-1-4987-4464-5},
  langid = {english},
  zbmath = {6843378},
  zmnumber = {1384.53007}
}

@article{Hitchin1990,
  title = {Harmonic Maps from a 2-Torus to the 3-Sphere},
  author = {Hitchin, Nigel J.},
  year = {1990},
  journal = {Journal of Differential Geometry},
  volume = {31},
  pages = {627-710},
  doi = {10.4310/jdg/1214444631},
  url = {http://projecteuclid.org/euclid.jdg/1214444631}
}

@book{Huybrechts2005,
  title = {Complex {{Geometry}}},
  author = {Huybrechts, Daniel},
  year = 2005,
  series = {Universitext},
  publisher = {Springer-Verlag},
  address = {Berlin, Heidelberg},
  doi = {10.1007/b137952},
  url = {http://link.springer.com/10.1007/b137952},
  urldate = {2024-04-10},
  copyright = {http://www.springer.com/tdm},
  isbn = {978-3-540-21290-4},
  langid = {english}
}

@article{Jaggy1994,
  title = {On the Classification of Constant Mean Curvature Tori in {{$\mathbb{R}^3$}}},
  author = {Jaggy, Christian},
  year = 1994,
  month = dec,
  journal = {Commentarii Mathematici Helvetici},
  volume = {69},
  number = {1},
  pages = {640--658},
  issn = {0010-2571, 1420-8946},
  doi = {10.1007/BF02564507},
  url = {https://link.springer.com/10.1007/BF02564507},
  urldate = {2026-09-28},
  langid = {english}
}

@article{Kapouleas1995,
  title = {Constant Mean Curvature Surfaces Constructed by Fusing {{Wente}} Tori},
  author = {Kapouleas, Nikolaos},
  year = 1995,
  month = dec,
  journal = {Inventiones Mathematicae},
  volume = {119},
  number = {1},
  pages = {443--518},
  issn = {0020-9910, 1432-1297},
  doi = {10/bhmwxm},
  url = {http://link.springer.com/10.1007/BF01245190},
  urldate = {2021-07-20},
  langid = {english}
}

@article{Kilian2007,
  title = {Unitarization of Monodromy Representations and Constant Mean Curvature Trinoids in 3-Dimensional Space Forms},
  author = {Kilian, M. and Kobayashi, S.-P. and Rossman, W. and Schmitt, N.},
  year = 2007,
  month = jun,
  journal = {Journal of the London Mathematical Society},
  volume = {75},
  number = {3},
  eprint = {math/0403366},
  pages = {563--581},
  issn = {00246107},
  doi = {10.1112/jlms/jdm005},
  url = {http://arxiv.org/abs/math/0403366},
  urldate = {2025-09-23},
  archiveprefix = {arXiv}
}

@article{Kilian2015,
  title = {Flows of Constant Mean Curvature Tori in the 3-Sphere: The Equivariant Case},
  author = {Kilian, Martin and Schmidt, Martin Ulrich and Schmitt, Nicholas},
  year = {2015},
  month = nov,
  journal = {Journal f{\"u}r die Reine und Angewandte Mathematik (Crelle's Journal)},
  volume = {707},
  pages = {45-86},
  issn = {14355345},
  doi = {10.1515/crelle-2013-0079},
  url = {http://arxiv.org/abs/1011.2875},
  archiveprefix = {arXiv},
  arxivid = {1011.2875}
}

@article{Knopf2025,
  title = {Solutions of the Sinh-{{Gordon}} Equation of Spectral Genus Two and Constrained {{Willmore}} Tori {{I}}},
  author = {Knopf, M. and Pe{\~n}a~Hoepner, R. and Schmidt, M. U.},
  year = 2025,
  month = jan,
  journal = {The Journal of Geometric Analysis},
  volume = {35},
  number = {1},
  pages = {29},
  issn = {1050-6926, 1559-002X},
  doi = {10.1007/s12220-024-01861-w},
  url = {https://link.springer.com/10.1007/s12220-024-01861-w},
  urldate = {2026-02-27},
  langid = {english}
}

@book{Lee2012,
  title = {Introduction to {{Smooth Manifolds}}},
  author = {Lee, John M.},
  year = 2012,
  series = {Graduate {{Texts}} in {{Mathematics}}},
  volume = {218},
  publisher = {Springer New York},
  address = {New York, NY},
  doi = {10.1007/978-1-4419-9982-5},
  url = {https://link.springer.com/10.1007/978-1-4419-9982-5},
  urldate = {2023-04-14},
  isbn = {978-1-4419-9981-8 978-1-4419-9982-5},
  langid = {english}
}

@book{Mumford1984,
  title = {Tata Lectures on {{Theta II}}},
  author = {Mumford, David},
  year = 1984,
  series = {Progress in Mathematics},
  number = {43},
  publisher = {Birkh\"auser},
  address = {Boston},
  isbn = {978-3-7643-3110-8 978-0-8176-3110-9},
  langid = {english},
  lccn = {515.984}
}

@book{Newman1972,
  title = {Integral Matrices},
  author = {Newman, Morris},
  year = 1972,
  series = {Pure and Applied Mathematics; a Series of Monographs and Textbooks},
  number = {45},
  publisher = {Academic Press},
  address = {New York},
  isbn = {978-0-12-517850-1 978-0-08-087358-9},
  langid = {english}
}

@misc{oeis:A034198,
  author       = {{OEIS Foundation Inc.}},
  title        = {Entry {A034198} in {T}he {O}n-{L}ine {E}ncyclopedia of
                  {I}nteger {S}equences},
  year         = {2026},
  howpublished = {\url{https://oeis.org/A034198}},
  note         = {Accessed 23 September 2026}
}

@article{Pinkall1989,
  title = {On the Classification of Constant Mean Curvature Tori},
  author = {Pinkall, Ulrich and Sterling, Ivan},
  year = {1989},
  journal = {Annals of Mathematics},
  volume = {130},
  number = {2},
  pages = {407-451},
  issn = {0003486X},
  doi = {10.2307/1971425},
  url = {http://dx.doi.org/10.2307/1971425}
}

@article{Wente1986,
  title = {Counterexample to a Conjecture of {{H}}. {{Hopf}}},
  author = {Wente, Henry C.},
  year = 1986,
  journal = {Pacific Journal of Mathematics},
  volume = {121},
  number = {1},
  pages = {193--243},
  issn = {0030-8730},
  doi = {10.2140/pjm.1986.121.193},
  url = {http://msp.org/pjm/1986/121-1/p18.xhtml}
}

\bigskip
{\small
Nicolas Hasse, Universit\"at Mannheim\\
\emph{Email address:} \href{mailto:nicolas.hasse@uni-mannheim.de}{\texttt{nicolas.hasse@uni-mannheim.de}}

Ross Ogilvie, Universit\"at Mannheim\\
\emph{Email address:} \href{mailto:r.ogilvie@uni-mannheim.de}{\texttt{r.ogilvie@uni-mannheim.de}}

Martin U.\ Schmidt, Universit\"at Mannheim\\
\emph{Email address:} \href{mailto:schmidt.math@uni-mannheim.de}{\texttt{schmidt.math@uni-mannheim.de}}
\par}
\end{document}